\newtheorem{thm}{Theorem}[section]
\newtheorem{no}[thm]{Numerical Observation}
\newtheorem{lem}[thm]{Lemma}
\theoremstyle{remark}
\newtheorem{rem}[thm]{Remark}
\newcommand{\ie}{{\it i.e.}}
\newcommand{\eg}{{\it e.g.}}
\newcommand{\ko}[1]{\text{\color{red}#1}}
\begin{document}
\title{Flat tori with large Laplacian \\ eigenvalues in dimensions up to eight}
\thanks{C. Kao acknowledges partial support from NSF DMS 1818948. B. Osting acknowledges partial support from NSF DMS 17-52202.}

\author{Chiu-Yen Kao}
\address{Department of Mathematical Sciences, Claremont McKenna College, Claremont, CA }
\email{ckao@cmc.edu}

\author{Braxton Osting}
\address{Department of Mathematics, University of Utah, Salt Lake City, UT}
\email{osting@math.utah.edu}

\author{Jackson C. Turner}
\address{Department of Applied Physics and Applied Mathematics, Columbia University, New York City, NY}
\email{jt3287@columbia.edu }

\keywords{Laplace operator; flat tori; eigenvalue optimization; densest lattice sphere packing problem}

\subjclass[2020]{
35P15, % Estimates of eigenvalues in context of PDEs
49K35, % Optimality conditions for minimax problems
58J50, %  Spectral problems; spectral geometry; scattering theory on manifolds
52C17.  %Packing and covering in $n$ dimensions (aspects of discrete geometry) 
}

\date{\today}

\begin{abstract} 
We consider the optimization problem of maximizing the $k$-th Laplacian eigenvalue, $\lambda_{k}$, over flat $d$-dimensional tori of fixed volume. For $k=1$, this problem is equivalent to the densest lattice sphere packing problem. 
For larger $k$, this is equivalent to the NP-hard problem of finding the $d$-dimensional (dual) lattice with longest $k$-th shortest lattice vector. As a result of extensive computations, for $d \leq 8$, we obtain a sequence of flat tori, $T_{k,d}$, each of volume one, such that the $k$-th Laplacian eigenvalue of $T_{k,d}$ is very large; 
for each (finite) $k$ the $k$-th eigenvalue exceeds the value in (the $k\to \infty$ asymptotic) Weyl's law by a factor between 1.54 and 2.01, depending on the dimension. Stationarity conditions are derived and numerically verified for $T_{k,d}$ and we describe the degeneration of the tori as $k \to \infty$.
\end{abstract}

\maketitle 

\section{Introduction}
\label{s:Intro}
Consider the $d$-dimensional lattice $\Gamma_B := B \mathbb Z^d$ generated by the basis matrix $B \in GL(d,\mathbb R)$ and the $d$-dimensional flat torus $T_B := \mathbb R^d / \Gamma_B$. 
The volume of $T_B$ is given by 
$\mathrm{vol}(T_B) = | \det B| $.
Each eigenpair, $(\lambda,\psi)$, of the Laplacian, $-\Delta$, on $T_B$ corresponds to an element of the dual lattice, $\Gamma_B^*  = B^{-t} \mathbb Z^d = \Gamma_{B^{-t}}$:
$$
\lambda = 4 \pi^2 \|w\|^2, 
\qquad 
\psi(x) = e^{2 \pi i \langle x, w \rangle}, 
\qquad 
\forall x \in T_B, \ w \in \Gamma_B^*.
$$ 
The multiplicity of each non-zero eigenvalue is even since $w \in \Gamma_B^*$ and $-w$ correspond to the same eigenvalue. 
It follows that the eigenvalues of $-\Delta$ on $T_B$,
enumerated in increasing order including multiplicity, 
$$
0 = \lambda_0 < \lambda_1 = \lambda_2 \leq \lambda_3 = \lambda_4 \leq \cdots,
$$
 are characterized by the Courant-Fischer formulae, 
\begin{equation} \label{e:CF}
\lambda_k(T_B)
= \min_{E \in \mathbb Z^d_{k+1}} \ \max_{v \in  E} \ 4 \pi^2 \| B^{-t} v \|^2,
\end{equation}
where $\mathbb Z^d_k := \{ E \subset \mathbb Z^d \colon  |E| = k \}$. 
Since the multiplicity is even, throughout this manuscript, it will be convenient to use the notation $\kappa :=2 \left\lceil \frac{k}{2} \right\rceil$, where $\left\lceil \cdot \right\rceil$ is the ceiling function. 

For $k \in \mathbb N$, define the \emph{volume-normalized Laplacian eigenvalue}, $\Lambda_k \colon GL(d,\mathbb R) \to \mathbb R$, by 
\begin{equation} \label{e:Lambda}
\Lambda_{k,d}(B) = \lambda_k(T_B) \cdot \mathrm{vol}(T_B)^{\frac 2 d}. 
\end{equation}
The volume-normalized eigenvalues are scale invariant in the sense that $\Lambda_{k,d} (\alpha B ) = \Lambda_{k,d} (B)$ for all $\alpha \in \mathbb R \setminus \{0\}$. 
Weyl's law states that for any  $B \in GL(d,\mathbb R)$, 
\begin{equation} \label{e:Weyl}
\textstyle 
\Lambda_{k,d}(B) \sim  g_d \  \pi^2  \   k^{\frac{2}{d}} ,
\qquad \qquad \textrm{as } k \to \infty,
\end{equation}
where $g_d = 4 (\omega_d)^{- \frac{2}{d}}$ and $\omega_d = \frac{\pi^{\frac d 2}}{ \Gamma\left( \frac d 2 + 1\right)}$ is the volume of the unit ball in $\mathbb R^d$.

\bigskip

In this work, for fixed $k,d \in \mathbb N$, we consider the eigenvalue optimization problem 
\begin{equation} \label{e:OptProb}
\Lambda_{k,d}^\star = \max_{B\in GL(d,\mathbb R)} \ \Lambda_{k,d} (B).
\end{equation}
The existence of a matrix  $B^\star$ attaining the maximum in \eqref{e:OptProb} was proven in \cite[Theorem~1.1]{Lagac__2019}. 
The tori $T_A$ and $T_B$ are isometric if and only if $A$ and $B$ are equivalent in 
$$
O(d,\mathbb R) \setminus GL(d,\mathbb R) / GL(d,\mathbb Z).
$$
Here, $O(d,\mathbb R) $ is the group of orthogonal matrices and $GL(d,\mathbb Z)$ is the group of unimodular matrices.
Since the Laplacian spectrum is preserved by isometry, it follows that the solution to the optimization problem in \eqref{e:OptProb} is not unique. 
Minkowski's first fundamental theorem implies that $\Lambda_{1,d}^\star \leq 4 \pi^2 d$; see, \eg, Theorem 22.1 and Corollary 22.1 in \cite{Gruber2007}. Together with the Courant-Fischer formula \eqref{e:CF},  this result  implies that
$\Lambda_{k,d}^\star \leq d \pi^2  \kappa^2$.
%, where $\kappa :=2 \left\lceil \frac{k}{2} \right\rceil$. 

For general $d$ and $k$, the maximizer in \eqref{e:OptProb} is unknown. In  dimension $d=1$, it is easy to see that  $\Lambda_{k,1}^\star = \pi^2 \kappa^2$. In  dimension $d=2$, it was shown by M. Berger that $\Lambda^\star_{1,2} = \frac{8 \pi^2}{\sqrt{3}}$ is attained by the basis 
$B_{1,2} = 
\frac{1}{2} \begin{pmatrix}
2 & 1 \\
0 & \sqrt{3}
\end{pmatrix}$, 
which generates the equilateral torus \cite{Berger1973}. It was shown in \cite{KaoLaiOsting} that for $k\geq 1$, 
\begin{equation} 
\label{e:Bk2}
B_{k,2} = 
\frac{1}{2} \begin{pmatrix}
2 & 1  \\
0 & \sqrt{ \kappa^2 - 1}  
\end{pmatrix}
\end{equation}
is a local maximum with value 
$ \textstyle
\Lambda_{k,2} = \frac{2 \pi^2 \kappa^2} {\sqrt{\kappa^2 -1 }}$.
It is shown that this lattice is globally optimal for $k=1,2,3,4$. For each $k$, the corresponding eigenvalue has multiplicity 6 and,   
as $k\to \infty$, the flat tori generated by these bases degenerate. 

\subsection*{Principal eigenvalue.}
We first review the relationship between the  principal volume-normalized eigenvalue and the \emph{lattice sphere packing problem}. Recall that for a given lattice, $\Gamma_A$, the \emph{density of a sphere packing with centers at $\Gamma_A$} is given by 
$$
\mathcal{P}(A) = \textrm{proportion of space that is occupied by the spheres}.
$$
The \emph{kissing number}, $\tau(A)$, associated with the sphere packing is the number of other spheres each sphere touches. 

Using the Courant-Fischer formulae \eqref{e:CF} with $k = 1$, 
$
\lambda_1(T_B) 
= \min_{E \in \mathbb Z^d \setminus \{ 0 \}}  \ 4 \pi^2 \| B^{-t} v \|^2, 
$
we see that $\sqrt{\frac{\lambda_1}{4 \pi^2}}$ is the length of the shortest vector in the lattice $\Gamma^*_B$. The density of a packing of balls with centers on the dual lattice, $\Gamma_B^*$ is
$$
\mathcal{P}(B^\ast) = \frac{\textrm{volume of ball}}{ \textrm{volume of fundamental region}} = \frac{\omega_d \rho^d}{ |\det B^{-t}  | }
=\omega_d \rho^d | \det B | , 
$$
where $\rho$ is the radius of the balls. Observing that the shortest vector in the lattice is exactly twice the radius of the ball packing, we have 
$\sqrt{\frac{\lambda_1}{4 \pi^2}} = 2\rho$, giving 
$\rho^2 = \frac{\lambda_1}{16 \pi^2}$. 
It then follows that 
$
\mathcal{P}^{\frac 2 d} = \omega_d^{\frac 2 d} \rho^2 ( \det B)^{\frac 2 d} 
= \omega_d^{\frac 2 d} \frac{\lambda_1}{16 \pi^2} ( \det B)^{\frac 2 d}
= \omega_d^{\frac 2 d} \frac{1}{16 \pi^2} \Lambda_1
$. Rearranging gives the following lemma. 
\begin{lem} \label{p:Equiv}
Let $B \in GL(d,\mathbb R)$ and let $\Lambda_{1,d}(B) = \lambda_1(T_B) \cdot \mathrm{vol}(T_B)^{\frac 2 d}$ be the corresponding principal volume-normalized eigenvalue of the flat torus $T_B := \mathbb R^d / \Gamma_B$. 
Let $\mathcal{P}(B^\ast)$ be the packing density for the arrangement of balls with centers on the dual lattice, $\Gamma_B^* = B^{-t}\mathbb Z^d$. Then 
\begin{equation}
\Lambda_{1,d}(B) = 16 \pi^2 \omega_d^{- \frac 2 d} \mathcal{P}(B^\ast)^{\frac 2 d}, 
\end{equation}
where $\omega_d$ denotes the volume of a $d$-dimensional ball. 
Furthermore, the kissing number, $\tau(B^\ast)$, of the packing is the multiplicity of $\lambda_1(T_B)$.
\end{lem}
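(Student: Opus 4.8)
The plan is to assemble the chain of identities sketched in the paragraphs preceding the statement into a clean argument, and then to treat the kissing-number claim separately. First I would apply the Courant--Fischer formula \eqref{e:CF} with $k=1$ to identify $\mu := \sqrt{\lambda_1(T_B)/(4\pi^2)}$ as the length of a shortest nonzero vector of the dual lattice $\Gamma_B^* = B^{-t}\mathbb Z^d$. In the optimal lattice ball packing with centers on $\Gamma_B^*$, balls of radius $\rho$ touch when their centers are at distance $2\rho$; the closest pair of centers realizes the minimal distance $\mu$, so $\mu = 2\rho$ and hence $\rho^2 = \lambda_1(T_B)/(16\pi^2)$.

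Next I would compute the packing density as the ratio of the volume $\omega_d\rho^d$ of a single ball to the covolume $|\det B^{-t}| = |\det B|^{-1}$ of $\Gamma_B^*$, so that $\mathcal{P}(B^\ast) = \omega_d\rho^d|\det B|$. Raising to the power $2/d$, substituting $\rho^2 = \lambda_1(T_B)/(16\pi^2)$, and using $\mathrm{vol}(T_B) = |\det B|$ together with the definition \eqref{e:Lambda} of $\Lambda_{1,d}(B) = \lambda_1(T_B)\,|\det B|^{2/d}$, one gets $\mathcal{P}(B^\ast)^{2/d} = \omega_d^{2/d}(16\pi^2)^{-1}\Lambda_{1,d}(B)$, and rearranging gives the stated identity $\Lambda_{1,d}(B) = 16\pi^2\,\omega_d^{-2/d}\,\mathcal{P}(B^\ast)^{2/d}$.

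For the kissing number, I would observe that the ball centered at the origin touches the ball centered at $w\in\Gamma_B^*$ precisely when $\|w\| = 2\rho = \mu$, i.e.\ when $w$ is a minimal vector of $\Gamma_B^*$; thus $\tau(B^\ast)$ equals the number of minimal dual vectors. On the other hand, the $\lambda_1$-eigenspace of $-\Delta$ on $T_B$ is spanned by the characters $x\mapsto e^{2\pi i\langle x,w\rangle}$ as $w$ ranges over exactly those minimal vectors, and distinct characters of the torus are linearly independent, so the multiplicity of $\lambda_1(T_B)$ equals the same count. Equating the two counts yields $\tau(B^\ast) = \operatorname{mult}\lambda_1(T_B)$. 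There is essentially no real obstacle; the only points meriting care are the normalization bookkeeping (the $4\pi^2$ factors, the transpose-inverse in passing to the dual lattice, and the $2/d$ exponent) and the standard fact that the $\lambda_1$-eigenspace is spanned precisely by the minimal-vector characters, with no extra linear relations.
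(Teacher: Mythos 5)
Your proposal is correct and follows essentially the same chain of identities as the paper's derivation preceding the lemma: identifying $\sqrt{\lambda_1/(4\pi^2)}$ with the shortest dual vector length, setting it equal to $2\rho$, computing $\mathcal{P}(B^\ast)=\omega_d\rho^d|\det B|$, and rearranging. Your explicit treatment of the kissing-number/multiplicity claim (counting minimal dual vectors on both sides) is a welcome addition, as the paper leaves that part implicit.
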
 
A consequence of Lemma~\ref{p:Equiv} is that \emph{the eigenvalue optimization problem in  \eqref{e:OptProb} for $k=1$ is equivalent to finding the densest lattice packing of balls in $d$-dimensions.} We can also restate the eigenvalue problem in terms of the \emph{Gram matrix} of $B^{-1}_{k,d}$, denoted  
$$
G_{k,d} =(B_{k,d})^{-1} (B_{k,d})^{-t} \in \mathbb R^{d\times d}.
$$
When $k=1$, \eqref{e:OptProb} can be written 
\begin{equation} \label{Hermite}
\frac{\Lambda_{1,d}^\star}{4 \pi^2} = \max_{G\in \mathcal{S}^{d}_{>0}}
\min_{v \in \mathbb Z^d \setminus{0}}  \frac{ v^{t} G v}{ \det{G}^{\frac 1 d}}.
\end{equation}
where $\mathcal{S}^{d}_{>0}$ is the space of positive definite quadratic forms. The right hand side term of (\ref{Hermite}) 
is the so-called \emph{Hermite's constant}. It is known that finding Hermite's constant is equivalent to determining the densest lattice sphere packing \cite[6]{schurmann2009computational}.

Much is known about the densest lattice packings for small dimensions, $d$ \cite{Conway_1999}. 
In particular, this problem is NP-hard \cite{ajtai1998shortest,micciancio2001shortest}, but the densest known lattices for dimension $d=1,\ldots,8$ are known via Voronoi’s algorithm for the enumeration of perfect positive definite quadratic forms \cite{schurmann2009computational}. The  corresponding largest volume-normalized Laplacian eigenvalues are  tabulated in Table~\ref{t:Lam1}. We refer the reader to \cite{Conway_1999} for details about the lattices and to the website of
G. Nebe \cite{NebeWebsite} for explicit bases and Gram matrices for these lattices. 
Note that the multiplicity of $\lambda_1$ for these flat tori is very large. 
We also note that recently, for dimension $d=8$ and  $k=1$, the $E_8$ lattice was proven to be the maximizer of  \eqref{e:OptProb} using a different technique \cite{Viazovska_2017}.

In this paper, we focus on dimensions $d\leq 8$, but we briefly remark that this problem for the principal volume-normalized eigenvalue has been studied in higher dimensions. In particular, \cite{Conway_1999} and \cite{Marcotte_2013} give the densest known lattices for higher dimensions and the Leech lattice was proven to give the densest lattice sphere packing in dimension 24  \cite{Cohn_2017}.

\begin{table}[t]
\begin{center}
\begin{tabular}{l  l l l l l } 
$d$ & $\Gamma_B^*$  &  $\tau(B^\ast)$ & $\mathcal P(B^\ast)$ & $\Lambda_{1,d}(B)$  \\ 
\hline 
1 & $A_1$ & 2 & 1 & $4 \pi^2 \approx 39.4784 $ \\
2 & $A_2$ & 6 & $\frac{\pi}{2 \sqrt{3}} \approx 0.9069 $ \cite{de1773recherches} & $\frac{ 8\pi^2}{ \sqrt{3}} \approx 45.5858$ \\
3 & $A_3 = D_3$ & 12 & $\frac{\pi}{3 \sqrt{2}} \approx 0.7405$ \cite{gauss1840untersuchungen} & $ 4\pi^2 2^{\frac 1 3} \approx 49.7397$  \\
4 & $D_4$ & 24 & $\frac{\pi^2}{16} \approx 0.6169$ \cite{korkine1877formes} & $4\pi^2 \sqrt{2} \approx 55.8309$  \\
5 & $D_5$ & 40 & $\frac{4 \pi^2}{15} 2^{-\frac 5 2} \approx 0.4653 $ \cite{korkine1877formes} & $4 \pi^2 2^{\frac{3}{5}}  \approx 59.8381$ \\
6 & $E_6$ & 72 & $\frac{\pi^3}{48 \sqrt{3} } \approx 0.3729 $ \cite{blichfeldt1935minimum} &  $8 \pi^2 3^{-\frac 1 6}\approx 65.7460$ \\
7 & $E_7$ & 126 &$\frac{\pi^3}{105} \approx 0.2953$ \cite{blichfeldt1935minimum}  & $4 \pi^2 2^{\frac 6 7}\approx 71.5131$ \\
8 & $E_8$ & 240 & $\frac{\pi^4}{384} \approx 0.2537$ \cite{blichfeldt1935minimum}  &  $8 \pi^2 \approx 78.9568$
\end{tabular} 
%%%%%%%%%%%%%%% 
\end{center}
\caption{For dimensions $d=1,\ldots, 8$, we tabulate the lattice with the largest known density, 
the corresponding kissing number $\tau(B^\ast)$, 
the density $\mathcal{P}(B^\ast)$, 
and the volume-normalized eigenvalue of the torus, $\Lambda_{1,d}(B)$. All values except $\Lambda_{1,d}$ can be obtained from \cite[Table 1.2]{Conway_1999}. }
\label{t:Lam1}
\end{table}

\subsection*{Higher eigenvalues.}
For higher values of $k$, the eigenvalue optimization problem in \eqref{e:OptProb} is less well-studied. 
Recently, Jean Legac\'e  observed that using the test lattice basis 
$ \tilde B_{k,d} = \mathrm{diag}(1, \cdots, 1, \frac{\kappa}{2}) \in \mathbb R^{d\times d}$, 
one can obtain the lower bound on the maximal value,
\begin{equation} \label{e:lowBnd}
\Lambda_{k,d}^\star \geq \Lambda_{k,d} (\tilde B_{k,d} ) = 2^{2 - \frac 2 d} \ \pi^2  \ \kappa^{\frac 2 d}, 
\qquad \qquad k,d \in \mathbb N. 
\end{equation}
Comparing \eqref{e:lowBnd} with Weyl's law \eqref{e:Weyl}, he observed that this is a meaningful bound  if $\omega_d \leq 2 = \omega_1$, which holds for $2\leq d \leq 10$. 
He further proved that, for  $2\leq d \leq 10$,  the optimal tori degenerate as $k \to \infty$ \cite{Lagac__2019}.

\subsection*{Summary of main results.} As a result of extensive computations, for dimensions $d=2,\ldots,8$ and all $k\geq 1$, we have identified $d$-dimensional flat tori 
$T^\circ_{k,d} := \mathbb R^d / \Gamma_{B_{k,d}^\circ}$, generated by 
lattices bases, $B_{k,d}^\circ$ which have very large $k$-th volume-normalized eigenvalue, $\Lambda_{k,d}^\circ := \Lambda_{k,d}(B_{k,d}^\circ)$. The bases $B_{k,d}^\circ$ have the largest objective function for the optimization problem \eqref{e:OptProb} that we were able to identify. 
Rather than report the basis matrices, we report the corresponding Gram matrices for $(B_{k,d}^\circ)^{-1}$, 
$$
G^\circ_{k,d} =(B_{k,d}^\circ)^{-1} (B_{k,d}^\circ)^{-t} \in \mathbb R^{d\times d}, 
$$
which have a nicer form. Define the $\mathbb Z^{8\times 8}$ matrix
$$
\mathcal{G}_k = \begin{pmatrix}
2 \kappa^2 & \kappa^2 &   \kappa^2 &  0 &  \kappa^2 & 0 &  \kappa^2 & -4 \\
\kappa^2&   2\kappa^2&     0&     0&     0&     0& \kappa^2& -4 \\
\kappa^2&       0& 2 \kappa^2&     0&    \kappa^2&       0&       0&  0 \\
0&       0&     0& 2 \kappa^2&       -\kappa^2&       \kappa^2&     -\kappa^2&  0 \\
\kappa^2&     0 &  \kappa^2&     -\kappa^2&   2 \kappa^2&       0&  \kappa^2&  0 \\
0&     0&     0&     \kappa^2&       0&    2\kappa^2 & - \kappa^2& 0 \\
\kappa^2& \kappa^2&     0&   -\kappa^2&  \kappa^2&  -\kappa^2&    2 \kappa^2& -4 \\
-4&      -4&     0&     0&      0&      0&      -4&  8 
\end{pmatrix}. 
$$
The Gram matrix $G^\circ_{k,d}$ is defined to be the $d\times d$ lower-right submatrix of $\mathcal{G}_k$ for each $k\geq 1$. 
A lattice basis, $B_{k,d}^\circ$ can be recovered from $G^\circ_{k,d}$ via the Cholesky decomposition. 
The nesting of the Gram matrices is a result of the dual lattices generated by the basis $(B_{k,d}^\circ)^{-t}$ being \emph{laminated}, \ie,  $(B_{k,d}^\circ)^{-t} =  \begin{pmatrix} \multirow{2}{*}{$b$} & - \ \ 0 \ \ -   \\ & (B_{k,d-1}^\circ)^{-t} \end{pmatrix}$ for some \emph{gluing vector} $b \in \mathbb R^d$. 
For example, in dimension $d=2$ we have $G^\circ_{k,2} = \begin{pmatrix} 2\kappa^2 & -4 \\ -4 & 8\end{pmatrix} \propto B_{k,2}^{-1} B_{k,2}^{-t} $, where $B_{k,2}$ is defined in \eqref{e:Bk2}. 

The following Numerical Observation\footnote{In this paper, we will use the terminology ``Numerical Observation'' to succinctly state results that depend on numerical computations. ``Theorem'' will be reserved for statements that can be proven without numerical computation.} summarizes the results of numerous computations for the flat tori $T^\circ_{k,d}$,  and their volume-normalized eigenvalues, $\Lambda_{k,d}^\circ$. 

\begin{table}[t]
\begin{center}
\renewcommand{\arraystretch}{1.2}
{\footnotesize
\begin{tabular}{|c|c|c|c|c|c|c|c|c|}
\hline 
$d$ & 1 & 2 & 3 & 4 & 5 & 6 & 7 & 8 \\
\hline 
$ \frac{\Lambda_{k,d}^{\circ} }{ h_d  \pi^2}$ 
& $ \kappa^{2}$ 
& $\left( \frac{  \kappa^{4}}{\kappa^{2}-1} \right)^{\frac 1 2}$ 
& $\left(\frac{\kappa^{4}}{\kappa^{2}-\frac{4}{3}}\right)^{\frac{1}{3}}$
& $\left( \frac{\kappa^4 }{\kappa^{2}-2}\right)^{\frac{1}{4}}$ 
& $ \left(\frac{\kappa^{4}}{\kappa^{2}-2}\right)^{\frac{1}{5}}$
& $ \left(\frac{\kappa^{4}}{\kappa^{2}- \frac{5}{2}}\right)^{\frac{1}{6}}$ 
& $  \left(\frac{\kappa^{4}}{\kappa^{2}-\frac{8}{3}}\right)^{\frac{1}{7}}$ 
& $ \left(\frac{\kappa^{4}}{\kappa^{2}-3}\right)^{\frac{1}{8}}$ \\
\hline 
% $ \frac{\Lambda_{k,d}^{\circ} }{ h_d  \pi^2}$ 
% & $ \kappa^{2}$ 
% & $2 \left( \frac{  \kappa^{4}}{\kappa^{2}-1} \right)^{\frac 1 2}$ 
% & $\frac{4}{3^{\frac{1}{3}}} \left(\frac{\kappa^{4}}{\kappa^{2}-\frac{4}{3}}\right)^{\frac{1}{3}}$
% & $2^{\frac 7 4}\left( \frac{\kappa^4 }{\kappa^{2}-2}\right)^{\frac{1}{4}}$ 
% & $4 \left(\frac{\kappa^{4}}{\kappa^{2}-2}\right)^{\frac{1}{5}}$
% & $2^{\frac{11}{6}}  \left(\frac{\kappa^{4}}{\kappa^{2}- \frac{5}{2}}\right)^{\frac{1}{6}}$ 
% & $4 \left( \frac{16}{3}\right)^{\frac 1 7}  \left(\frac{\kappa^{4}}{\kappa^{2}-\frac{8}{3}}\right)^{\frac{1}{7}}$ 
% & $2^{\frac 5 2} \left(\frac{\kappa^{4}}{\kappa^{2}-3}\right)^{\frac{1}{8}}$ \\
% \hline
$h_d$ &1 &2
&$4\cdot 3^{-\frac{1}{3}}$
&$2^{\frac 7 4}$ 
&4 
&$2^{\frac{11}{6}}$
&$4 \left( \frac{16}{3}\right)^{\frac 1 7} $
&$2^{\frac 5 2}$\\
\hline
$h_d/g_d$ 
&1
&$\frac{\pi}{2} $
&$\frac{2}{3} 2^{\frac 1 3} \pi^{\frac 2 3} $
&$\pi2^{- \frac 3 4} $
&$\frac{2 \sqrt[5]{2} \pi ^{4/5}}{15^{2/5}} $
&$\frac{\pi }{\sqrt{2} \sqrt[3]{3}} $
& $\frac{2\ 2^{5/7} \pi ^{6/7}}{3^{3/7} 35^{2/7}} $
&$\pi 6^{- \frac 1 4} $ \\
\hline
$h_d/g_d \approx$ 
&1
&$1.57$
&$ 1.80$
&$ 1.87$
&$ 1.94$
&$ 1.54$
&$ 1.98$
&$ 2.01$ \\
\hline
$k=1$ mult. & 2 & 6 & 12 & 24 & 40 & 72 & 126 & 240 \\
\hline 
$k\geq 2$ mult. & 2 & 6 & 12 & 22 & 38 & 62 & 106 & 182 \\
\hline
$| \det G^\circ_{k,d}|/8$ & 
 1 & 
 $2( \kappa^2 -1)$  &  
 $\kappa^2 (3\kappa^2 - 4)$ & 
 $4 \kappa^4(\kappa^2-2)$ & 
 $4 \kappa^6 (\kappa^2 - 2)$ & 
 $2 \kappa^8 (2 \kappa^2 - 5)$ & 
 $\kappa^{10} (3\kappa^2 - 8)$ & 
 $2 \kappa^{12} (\kappa^2 - 3)$ \\
\hline
%\approx$ & 1 & 1.57 & 1.80 & 1.87 & 1.94 & 1.54 & 1.98 & 2.01  \\
%\hline
\end{tabular}}
\caption{
$\frac{\Lambda_{k,d}^\circ } {h_d \pi^2}$, $h_d$
$h_d/g_d$, 
eigenvalue multiplicities for $k=1$ and $k\geq2$. 
See Numerical Observation~\ref{p:OptLattice} and following discussion in Section~\ref{s:Intro}.} 
\label{t:EigTable}
\end{center}
\end{table}

\begin{no} \label{p:OptLattice}
For $k\geq 1$ and $d \leq 8$,  
the flat tori $T^\circ_{k,d}$ have $k$-th volume-normalized eigenvalues 
$
\Lambda_{k,d}^\circ := \Lambda_{k,d}( B_{k,d}^\circ)
$  as tabulated in the second row of Table~\ref{t:EigTable}. 
The multiplicity of the eigenvalues are given in the sixth and seventh rows of Table~\ref{t:EigTable}.
The corresponding lattice vectors are of the form $\pm v$ where $v$ is a vector tabulated in Table~\ref{t:LattVecs}.
\end{no}

Details on our computations supporting Numerical Observation~\ref{p:OptLattice} are given in Section~\ref{s:proof:OptLattice}. Magma code with these supporting computations can be found at  \cite{github}. 

We plot $k$ vs $\Lambda_{k,d}^\circ$ for  $d\leq 8$ in Figure~\ref{f:Lamkd} and tabulate the first few values in Table~\ref{t:Lamkd}. 
Using the observation that for $a>0$, $\frac{\kappa^4}{\kappa^2-a} \geq k^2$, we have that 
for each dimension $d\leq 8$, 
$$ 
\Lambda_{k,d}^\circ \geq  h_d \ \pi^2 \ \kappa^\frac{2}{d}, 
\qquad \qquad 
\forall k \geq 1, 
$$
where  $\kappa :=2 \left\lceil \frac{k}{2} \right\rceil$ and $h_d$  is a constant,  which does not depend on $k$, as tabulated in the third row of Table~\ref{t:EigTable}. In particular, this shows that the optimal value in \eqref{e:OptProb} satisfies 
 $$
\Lambda_{k,d}^\star \geq  h_d \  \pi^2 \ \kappa^\frac{2}{d},
\qquad \qquad 
\forall k \geq 1.
$$
In the fourth row of Table~\ref{t:EigTable}, we compute the value of $h_d/g_d$, where $g_d$ is the constant appearing in  Weyl's law. 
\emph{Depending on the dimension, the Laplace eigenvalues of $T^\circ_{k,d}$ exceed the value in Weyl's law by a factor between 1.54 and 2.01} as indicated in the fifth row of Table~\ref{t:EigTable}.

The eigenvalue multiplicities listed in Table~\ref{t:EigTable} are very large. This is a consequence of the fact that all lattice vectors $v \in \mathbb Z^8$ in Table~\ref{t:LattVecs}  satisfy 
$v^t G^\circ_{k,d} v = 8 \left\lceil \frac{k}{2} \right\rceil^2=2 \kappa^2$. 
Note that the first vector in the table is 
$\left(0,\cdots,0,\left\lceil \frac{k}{2} \right\rceil \right) \in \mathbb Z^8$. The $k-1$ non-trivial lattice vectors, $v$ with smaller value of $v\mapsto v^t G^\circ_{k,d} v$ are of the form $\pm \left(0,\cdots,0,\left\lceil \frac{j}{2} \right\rceil \right)$, where $j=1,\ldots, k-1$. 

In Section~\ref{s:eigPert}, we give a condition for stationarity; see Theorem~\ref{t:NeccCond}. 
In Section~\ref{s:LocOpt}, we show numerically that the bases $B_{k,d}^\circ$ satisfy this stationarity condition for $k\geq 1$ and $d\leq 8$. In Section~\ref{s:Degeneracy}, we also  show numerically that the tori $T^\circ_{k,d}$ degenerate as $k \to \infty$. Supporting Sage code for these numerical claims is available at \cite{github}. 

Finally, in Section~\ref{s:NumMeth}, we describe the numerical methods that were used to compute the locally maximal solutions to the optimization problem in  \eqref{e:OptProb} and compute the bases $B^\circ_{k,d}$, described above. Briefly, the optimization problem in \eqref{e:OptProb} was solved by solving a sequence of linearized problems, similar to the method in \cite{Marcotte_2013} for the closest packing problem ($k=1$).  We have also used these methods to investigate \eqref{e:OptProb} for $d>8$. Although we have identified locally optimal solutions in higher dimensions, we were not able to identify laminated structure in these higher dimensional lattices.

\subsection*{Other related work}
We briefly mention that Milnor used the relationship between flat tori and lattices to find two 16 dimensional compact Riemannian Manifolds that have the same Laplace spectrum (isospectral) but are not isometric \cite{Milnor_1964}. 

In this paper, we count the length of lattice vectors \emph{with multiplicity}. However, we could consider the problem where we enumerate the length of vectors in $\Gamma_B$ in increasing order \emph{without multiplicity}, 
$$
0 = \nu_0 < \nu_1 < \nu_2 < \cdots,
$$
where $\nu_k$ is called the $k$-th  length of $\Gamma_B$. In this setting, for dimensions 2 to 8, Paul Schmutz Schaller \cite{schaller1998geometry} conjectured that the lattices with best known sphere packings have maximal lengths, \ie, for all $k>0$ their $k$-th  length is strictly greater than the $k$-th  length of any other lattice in the same dimension with the same covolume. This problem is also equivalent to the extremal $k$-th length of closed geodesics among the flat tori of the same dimension and volume.
In \cite{willging2008conjecture}, Willging showed that the conjecture is false in dimension 3 and demonstrated that the $6$-th shortest vector of the honeycomb lattice is longer than the $6$-th shortest vector of  the face-centered cubic lattice, which is the optimal lattice for sphere packing in dimension 3.

\begin{figure}[t!]
\centering
\includegraphics[width=0.8\textwidth,trim={0 0 1cm 1cm},clip]{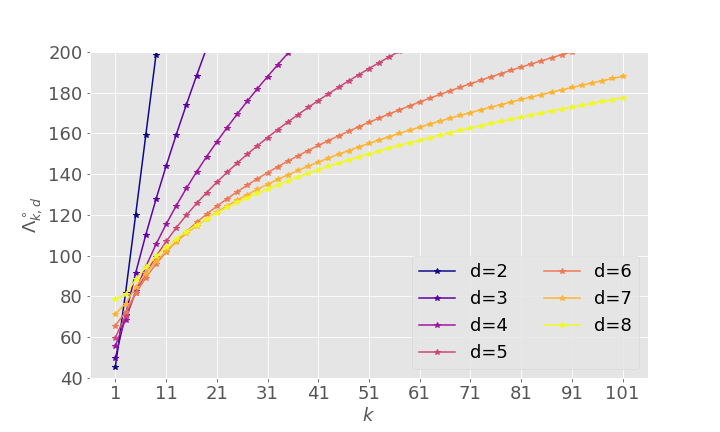}
\caption{\label{f:Lamkd} For indicated dimensions $d=1,\ldots,8$, a plot of $k$ vs. $\Lambda^\circ_{k,d}$. }
\end{figure}

\begin{table}[t!]
\begin{center}
%%%%%%%%%%%%%%% 
\vspace{1cm}
\begin{tabular}{lrrrrrrrr}
\hline
 $k \setminus d$   &         1 &         2 &         3 &         4 &         5 &         6 &         7 &         8 \\
\hline
 1,2               &    39.478 &    45.586 &    49.740 &    55.831 &    59.838 &    65.746 &    71.513 &    78.957 \\
 3,4               &   157.914 &    81.546 &    71.005 &    68.648 &    70.596 &    72.363 &    76.480 &    81.033 \\
 5,6               &   355.306 &   120.115 &    91.527 &    82.487 &    81.768 &    81.494 &    84.590 &    88.336 \\
 7,8               &   631.655 &   159.162 &   110.262 &    94.644 &    91.275 &    89.217 &    91.387 &    94.461 \\
 9,10              &   986.960 &   198.387 &   127.623 &   105.511 &    99.567 &    95.873 &    97.187 &    99.662 \\
 11,12             &  1421.223 &   237.697 &   143.920 &   115.401 &   106.966 &   101.748 &   102.262 &   104.187 \\
 13,14             &  1934.442 &   277.057 &   159.365 &   124.532 &   113.685 &   107.029 &   106.790 &   108.204 \\
 15,16             &  2526.619 &   316.446 &   174.109 &   133.050 &   119.864 &   111.845 &   110.892 &   111.826 \\
 17,18             &  3197.752 &   355.855 &   188.263 &   141.062 &   125.605 &   116.283 &   114.651 &   115.132 \\
 19,20             &  3947.842 &   395.279 &   201.909 &   148.649 &   130.981 &   120.410 &   118.128 &   118.179 \\
\hline
\end{tabular}

%%%%%%%%%%%%%%% 
\end{center}
\caption{ $\Lambda^\circ_{k,d}$ for the indicated values of the eigenvalue number $k$ and dimension $d$. }
\vspace{1cm}
\label{t:Lamkd}
\end{table}

\begin{table}[ht!]
\vspace{-.3cm}
\scriptsize
\renewcommand{\arraystretch}{0.95}
\setlength\columnsep{0pt}
\begin{multicols}{2}
\begin{tabular}{r|r|cccccccc}    
$k=1,2$ & $k\geq3$ & \multicolumn{8}{l}{lattice vector} \\
\hline 
1 & \ko{1}& 0 &  0 &  0 &  0 &  0 &  0 &  0 &  $\lceil k/2 \rceil$ \\   
\hline
2 & 2& 0 &  0 &  0 &  0 &  0 &  0 &  1 &  0 \\   
3 & 3& 0 &  0 &  0 &  0 &  0 &  0 &  1 &  1 \\   
\hline
4 & \ko{4}& 0 &  0 &  0 &  0 &  0 &  1 &  0 &  0 \\   
5 & 5& 0 &  0 &  0 &  0 &  0 &  1 &  1 &  0 \\   
6 & 6& 0 &  0 &  0 &  0 &  0 &  1 &  1 &  1 \\   
\hline
7 & \ko{7}& 0 &  0 &  0 &  0 &  1 &  0 &  0 &  0 \\   
8& 8 & 0 &  0 &  0 &  0 &  1 &  0 & -1 &  0 \\   
9 & 9& 0 &  0 &  0 &  0 &  1 & -1 & -1 &  0 \\  
10 & 10& 0 &  0 &  0 &  0 &  1 &  0 & -1 & -1 \\  
11 & 11& 0 &  0 &  0 &  0 &  1 & -1 & -1 & -1 \\  
12 & & 0 &  0 &  0 &  0 &  1 & -1 & -2 & -1 \\  
\hline
13 & \ko{12} & 0 &  0 &  0 &  1 &  0 &  0 &  0 &  0 \\  
14 & \ko{13} & 0 &  0 &  0 &  1 &  0 & -1 &  0 &  0 \\  
15 & \ko{14} & 0 &  0 &  0 &  1 &  1 & -1 &  0 &  0 \\  
16 & \ko{15} & 0 &  0 &  0 &  1 &  1 &  0 &  0 &  0 \\  
17 & 16 & 0 &  0 &  0 &  1 &  0 &  0 &  1 &  1 \\  
18 & 17 & 0 &  0 &  0 &  1 &  0 &  0 &  1 &  0 \\  
19 & 18 & 0 &  0 &  0 &  1 &  1 & -1 & -1 & -1 \\  
20 & 19 & 0 &  0 &  0 &  1 &  1 & -1 & -1 &  0 \\  
\hline
21 & \ko{20} & 0 &  0 &  1 &  0 &  0 &  0 &  0 &  0 \\  
22 & \ko{21} & 0 &  0 &  1 & -1 & -1 &  1 &  0 &  0 \\  
23 & \ko{22} & 0 &  0 &  1 &  0 & -1 &  0 &  0 &  0 \\  
34 & \ko{23} & 0 &  0 &  1 & -1 & -1 &  0 &  0 &  0 \\  
25 & 24 & 0 &  0 &  1 &  0 & -1 &  0 &  1 &  1 \\  
26 & 25 & 0 &  0 &  1 &  0 & -1 &  0 &  1 &  0 \\  
27 & 26 & 0 &  0 &  1 &  0 & -1 &  1 &  1 &  1 \\  
28 & 27 & 0 &  0 &  1 & -1 & -1 &  1 &  1 &  1 \\  
29 & 28 & 0 &  0 &  1 &  0 & -1 &  1 &  1 &  0 \\  
30 & 29 & 0 &  0 &  1 & -1 & -1 &  1 &  1 &  0 \\  
31 & 30 & 0 &  0 &  1 & -1 & -2 &  1 &  1 &  1 \\  
32 & 31 & 0 &  0 &  1 & -1 & -2 &  1 &  1 &  0 \\  
33 & & 0 &  0 &  1 & -1 & -2 &  2 &  2 &  1 \\  
34 & & 0 &  0 &  1 &  0 & -1 &  1 &  2 &  1 \\  
35 & & 0 &  0 &  1 &  0 & -2 &  1 &  2 &  1 \\  
36 & & 0 &  0 &  1 & -1 & -2 &  1 &  2 &  1 \\  
\hline
37 & 32 & 0 &  1 &  0 &  0 &  0 &  0 &  0 &  0 \\  
38 & \ko{33} & 0 &  1 &  0 &  0 &  0 & -1 & -1 &  0 \\  
39 & \ko{34} & 0 &  1 &  0 &  0 &  0 &  0 & -1 &  0 \\  
40 & \ko{35} & 0 &  1 &  0 & -1 &  0 &  0 & -1 &  0 \\  
41 & \ko{36} & 0 &  1 &  0 &  0 &  1 &  0 & -1 &  0 \\  
42 & \ko{37} & 0 &  1 & -1 &  0 &  1 &  0 & -1 &  0 \\  
43 & \ko{38} & 0 &  1 &  0 &  0 &  1 & -1 & -1 &  0 \\  
44 & \ko{39} & 0 &  1 & -1 &  0 &  1 & -1 & -1 &  0 \\  
45 & \ko{40} & 0 &  1 &  0 &  1 &  1 & -1 & -1 &  0 \\  
46 & \ko{41} & 0 &  1 & -1 &  1 &  1 & -1 & -1 &  0 \\  
47 & \ko{42} & 0 &  1 & -1 &  1 &  2 & -1 & -1 &  0 \\  
48 & 43 & 0 &  1 &  0 &  0 &  1 & -1 & -2 &  0 \\  
49 & 44 & 0 &  1 & -1 &  0 &  1 & -1 & -2 &  0 \\  
50 & 45 & 0 &  1 &  0 &  0 &  1 & -1 & -2 & -1 \\  
51 & 46 & 0 &  1 & -1 &  0 &  1 & -1 & -2 & -1 \\  
52 & 47 & 0 &  1 & -1 &  1 &  2 & -2 & -2 &  0 \\  
53 & 48 & 0 &  1 & -1 &  1 &  2 & -2 & -2 & -1 \\  
54 & 49 & 0 &  1 &  0 &  0 &  0 &  0 &  0 &  1 \\  
55 & 50 & 0 &  1 & -1 &  0 &  2 & -1 & -2 &  0 \\  
56 & 51 & 0 &  1 & -1 &  1 &  2 & -1 & -2 &  0 \\  
57 & 52 & 0 &  1 & -1 &  0 &  2 & -1 & -2 & -1 \\  
58 & 53 & 0 &  1 & -1 &  1 &  2 & -1 & -2 & -1 \\  
59 & & 0 &  1 & -2 &  1 &  3 & -2 & -3 & -1 \\
60 & & 0 &  1 & -1 &  1 &  2 & -2 & -3 & -1 
\end{tabular}

\begin{tabular}{r|r|cccccccc}
$k=1,2$ & $k\geq3$ & \multicolumn{8}{l}{lattice vector} \\
\hline 
61 & & 0 &  1 & -1 &  0 &  2 & -2 & -3 & -1 \\  
62 & & 0 &  1 & -1 &  0 &  2 & -1 & -3 & -1 \\  
63 & & 0 &  1 & -1 &  1 &  3 & -2 & -3 & -1 \\  
\hline
64 & 54 & 1 &  0 &  0 &  0 &  0 &  0 &  0 &  0 \\  
65 & \ko{55} & 1 &  0 &  0 & -1 & -1 &  0 & -1 &  0 \\  
66 & \ko{56} & 1 &  0 &  0 &  0 &  0 & -1 & -1 &  0 \\  
67 & \ko{57} & 1 &  1 & -1 &  0 &  1 & -1 & -2 &  0 \\  
68 & \ko{58} & 1 &  0 &  0 &  0 &  0 &  0 & -1 &  0 \\  
69 & \ko{59} & 1 &  0 &  0 & -1 &  0 &  0 & -1 &  0 \\  
70 & \ko{60} & 1 &  0 & -1 &  0 &  0 & -1 & -1 &  0 \\  
71 & \ko{61} & 1 & -1 &  0 & -1 & -1 &  1 &  0 &  0 \\  
72 & \ko{62} & 1 & -1 &  0 &  0 & -1 &  0 &  0 &  0 \\  
73 & \ko{63} & 1 & -1 &  0 & -1 & -1 &  0 &  0 &  0 \\  
74 & \ko{64} & 1 &  0 & -1 &  0 &  0 &  0 & -1 &  0 \\  
75 & \ko{65} & 1 &  0 & -1 & -1 &  0 &  0 & -1 &  0 \\  
76 & \ko{66} & 1 & -1 & -1 &  0 &  0 &  0 &  0 &  0 \\  
77 & \ko{67} & 1 & -1 &  0 &  0 &  0 &  0 &  0 &  0 \\  
78 & \ko{68} & 1 &  0 & -1 &  0 &  1 &  0 & -1 &  0 \\  
79 & \ko{69} & 1 &  0 & -1 &  0 &  1 & -1 & -1 &  0 \\  
80 & \ko{70} & 1 &  0 & -1 &  1 &  1 & -1 & -1 &  0 \\ 
81 & 71 & 1 & -1 &  0 & -1 & -2 &  1 &  1 &  0 \\  
82 & 72 & 1 & -1 &  0 &  0 & -1 &  0 &  1 &  0 \\  
83 & 73 & 1 & -1 &  0 &  0 & -1 &  0 &  1 &  1 \\  
84 & 74 & 1 &  0 & -1 &  0 &  1 & -1 & -2 & -1 \\  
85 & 75 & 1 &  0 & -1 &  0 &  1 & -1 & -2 &  0 \\  
86 & 76 & 1 & -1 &  0 &  0 & -1 &  1 &  1 &  0 \\  
87 & 77 & 1 & -1 &  0 & -1 & -1 &  1 &  1 &  0 \\  
88 & 78 & 1 & -1 &  0 &  0 & -1 &  1 &  1 &  1 \\  
89 & 79 & 1 & -1 &  0 & -1 & -1 &  1 &  1 &  1 \\  
90 & 80 & 1 &  0 &  0 & -1 & -1 &  1 &  0 &  0 \\  
91 & 81 & 1 & -1 &  0 & -1 & -2 &  1 &  1 &  1 \\  
92 & 82 & 1 &  0 &  0 &  0 & -1 &  0 &  0 &  0 \\  
93 & 83 & 1 &  0 &  0 & -1 & -1 &  0 &  0 &  0 \\  
94 & 84 & 1 &  0 &  0 &  0 &  0 &  0 &  0 &  1 \\  
95 & 85 & 1 &  0 & -1 &  0 &  0 &  0 &  0 &  0 \\  
96 & 86 & 1 &  0 &  0 & -1 & -1 &  1 &  0 &  1 \\  
97 & 87 & 1 &  0 &  0 &  0 & -1 &  0 &  0 &  1 \\  
98 & 88 & 1 &  0 &  0 & -1 & -1 &  0 &  0 &  1 \\  
99 & 89 & 1 &  0 & -1 &  0 &  0 &  0 &  0 &  1 \\  
100 & 90 & 1 & -1 &  1 & -1 & -2 &  1 &  1 &  0 \\ 
101 & 91 & 1 & -1 &  1 & -1 & -2 &  1 &  1 &  1 \\ 
102 & & 1 & -1 &  0 &  0 & -1 &  1 &  2 &  1 \\ 
103 & & 1 & -1 &  1 & -1 & -3 &  2 &  2 &  1 \\ 
104 & & 1 & -1 &  1 & -2 & -3 &  2 &  2 &  1 \\
105 & & 1 & -1 &  0 &  0 & -2 &  1 &  2 &  1 \\ 
106 & & 1 &  0 &  1 & -1 & -2 &  1 &  1 &  1 \\ 
107 & & 1 & -1 &  0 & -1 & -2 &  2 &  2 &  1 \\ 
108 & & 1 &  0 &  0 &  0 & -1 &  0 &  1 &  1 \\ 
109 & & 1 & -1 &  1 &  0 & -2 &  1 &  2 &  1 \\ 
110 & & 1 &  0 &  0 & -1 & -2 &  1 &  1 &  1 \\ 
111 & & 1 &  0 &  0 & -1 & -1 &  1 &  1 &  1 \\
112 & & 1 & -1 &  1 & -1 & -3 &  2 &  3 &  1 \\ 
113 & & 1 & -2 &  1 & -1 & -3 &  2 &  3 &  1 \\ 
114 & & 1 & -1 &  1 & -1 & -3 &  2 &  3 &  2 \\ 
115 & & 1 & -1 &  1 & -1 & -2 &  2 &  2 &  1 \\ 
116 & & 1 & -1 &  1 & -1 & -3 &  1 &  2 &  1 \\ 
117 & & 1 &  0 &  0 &  0 & -1 &  1 &  1 &  1 \\ 
118 & & 1 & -1 &  0 & -1 & -2 &  1 &  2 &  1 \\ 
119 & & 1 & -1 &  1 & -1 & -2 &  1 &  2 &  1 \\ 
120 & & 2 & -1 &  0 & -1 & -2 &  1 &  1 &  1
\end{tabular}
\end{multicols}
\vspace{-.25cm}
\caption{For the $B^\circ_{k,d}$-lattice, $k$-th shortest lattice vectors and indexing for $k=1,2$ and $k\geq 3$. For $d<8$, as indicated by horizontal lines, we use only vectors that are zero in the first $8-d$ components. The red indices are discussed in Section \ref{s:LocOpt}.}
\label{t:LattVecs}
\end{table}

 \clearpage

\section{Comments on the computations supporting Numerical Observation~\ref{p:OptLattice}} \label{s:proof:OptLattice}

Here, we discuss the claims in Numerical Observation~\ref{p:OptLattice} 
regarding the
$k$-th volume-normalized Laplacian eigenvalues of the torus $T^\circ_{k,d}$, 
\begin{equation}
\Lambda^\circ_{k,d} :=   \min_{E \in \mathbb Z^d_{k+1}} \ \max_{v \in  E} \  4 \pi^2 (\det B^\circ_{k,d} )^{\frac 2 d} \| (B^\circ_{k,d})^{-t} v \|^2. 
\qquad 
\label{e:contour}
\end{equation}
and the corresponding lattice vectors, $E$.
For $k=1$, the computation of $\Lambda^\circ_{k,d}$ is known as the shortest lattice vector problem (SVP) for the dual lattice, $\Gamma^*_{B^\circ_{k,d}} = (B^\circ_{k,d})^{-t} \mathbb Z^d$. 
The SVP appears in a variety of cryptoanalysis problems and, although NP-hard \cite{ajtai1998shortest,micciancio2001shortest}, 
can be solved \emph{for fixed k} in moderately-high dimensions \cite{Hanrot_2011,Mariano_2017}.
\emph{We are unaware of a method to find the shortest $k$ vectors of the lattice $B^\circ_{k,d}$ analytically. }

For fixed (small to moderately large) $k \in \mathbb N$ , we can compute $\Lambda^\circ_{k,d}$ in a rigorous way using the 
\verb+ShortVectors+ function\footnote{\url{http://magma.maths.usyd.edu.au/magma/handbook/text/331}} in Magma \cite{MR1484478}. 
The enumeration routine underlying this function relies on floating-point approximation, but is run in a rigorous way by using the default setting with the parameter \verb+Proof+ set to \verb+true+. For each $d = 1,\ldots, 8$, we checked all values of $k$ from 1 to 500,000 and every value 
$
k\in \{
1\times 10^6, \ 
2\times 10^6, \ 
\cdots, \ 
9\times 10^6, \ 
1 \times 10^7, \
2 \times 10^7, \
\cdots, \ 
9\times 10^7, \ 
1 \times 10^8 
\}$. 
Magma code with these supporting computations can be found in the \verb+solve_SVP.magma+ file at  \cite{github}. Indeed, the claims made in Numerical Observation~\ref{p:OptLattice} hold for these values of $k$.

\section{Eigenvalue perturbation formulae and conditions for  stationarity} 
\label{s:eigPert}

Recall that the eigenvalues of $-\Delta$ on a flat torus each have multiplicity of at least two. We will refer to an eigenvalue as a \emph{double eigenvalue} if it has multiplicity of exactly two. We first give the perturbation formula for a double eigenvalue. 

\begin{thm} \label{t:EigPertSimp}
When $\lambda$ is a double eigenvalue with corresponding lattice vectors $\pm v \in \mathbb Z^d$, the variation of the normalized eigenvalue 
$\Lambda$ with respect to the Gram matrix $G$ satisfies
\[
\Lambda\left(G_{0}+\delta G\right)=\Lambda\left(G_{0}\right)+\left\langle \frac{\partial\Lambda}{\partial G},\delta G\right\rangle_F +o\left(\left\Vert \delta G\right\Vert \right) 
\]
where
$\frac{\partial\Lambda}{\partial G} = 
-\frac{\Lambda}{d} G^{-1} + 4\pi^{2}\left(\det(G)\right)^{-\frac{1}{d}} v v^t$. 
\end{thm}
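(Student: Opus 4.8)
The plan is to reduce $\Lambda$ to an explicit smooth function of $G$ on a neighborhood of $G_0$ and then differentiate. First I would record the two elementary identities linking the spectrum to $G$: if $\pm v \in \mathbb Z^d$ is the lattice vector (up to sign) realizing the $k$-th eigenvalue, then $\lambda_k(T_B) = 4\pi^2 v^t G v$, using $\lambda = 4\pi^2\|B^{-t}v\|^2$ and $G = B^{-1}B^{-t}$, while $\mathrm{vol}(T_B)^{2/d} = |\det B|^{2/d} = (\det G)^{-1/d}$ since $\det G = (\det B)^{-2}$. Hence, wherever $\pm v$ continues to realize $\lambda_k$, one has the closed form
\[
\Lambda(G) = 4\pi^2 \, (v^t G v)\,(\det G)^{-1/d}.
\]

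Second, I would argue that this closed form is valid on a whole neighborhood of $G_0$. There is $c>0$ with $w^t G_0 w \ge c\|w\|^2$, so $w^t G w$ is uniformly large for $\|w\|$ large when $G$ is near $G_0$; combined with the finiteness of $\{w \in \mathbb Z^d : w^t G_0 w \le R\}$ for any $R$, this shows that for $G$ close to $G_0$ the smallest $k+1$ eigenvalues of $T_B$ are the $k+1$ smallest values among a \emph{fixed finite} family of smooth functions $G \mapsto 4\pi^2 w^t G w$. Because $\lambda_k(T_{B_0})$ has multiplicity \emph{exactly} two, the pair $\pm v$ occupies the $k$-th slot with a strict spectral gap to the neighboring eigenvalues; by continuity this ordering and gap persist for $G$ near $G_0$, so $\Lambda$ coincides with the displayed closed form near $G_0$, is differentiable there, and the remainder is $o(\|\delta G\|)$.

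Finally I would differentiate the closed form with respect to the Frobenius inner product. Since $v^t G v = \langle G, vv^t\rangle_F$, we get $\partial_G (v^t G v) = vv^t$; and by Jacobi's formula together with symmetry of $G$, $\partial_G \det G = (\det G)\,G^{-1}$, hence $\partial_G (\det G)^{-1/d} = -\tfrac1d (\det G)^{-1/d} G^{-1}$. The product rule then gives
\[
\frac{\partial \Lambda}{\partial G} = 4\pi^2 (\det G)^{-1/d} vv^t \;-\; \frac{1}{d}\,\bigl(4\pi^2 (v^tGv)(\det G)^{-1/d}\bigr)\,G^{-1} = 4\pi^2 (\det G)^{-1/d} vv^t - \frac{\Lambda}{d} G^{-1},
\]
as claimed; both terms are already symmetric, consistent with $\delta G$ ranging over symmetric perturbations (and with the fact that every symmetric matrix near $G_0$ is itself a valid Gram matrix via Cholesky).

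The only genuinely delicate point — and the step I would be most careful with — is the persistence argument in the second paragraph: one must use that the multiplicity is exactly two, so the $k$-th eigenvalue branch is a \emph{single} smooth function near $G_0$ rather than a minimum/maximum of several competing branches. This is exactly where the hypothesis of the theorem is essential; for higher multiplicity the dependence of $\Lambda$ on $G$ is merely Lipschitz and directionally differentiable, governed by the eigenvalues of a small symmetric matrix built from the degenerate lattice vectors. Everything after that reduction is routine matrix calculus.
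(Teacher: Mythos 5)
Your proof is correct and follows essentially the same route as the paper's: write $\Lambda(G) = 4\pi^{2}(\det G)^{-1/d}\langle vv^{t}, G\rangle_F$ for the fixed lattice vector $v$, then apply Jacobi's formula and the product rule. The persistence argument in your second paragraph --- justifying that $\pm v$ continues to realize the eigenvalue for $G$ near $G_0$ because the multiplicity is exactly two --- is left implicit in the paper (``for fixed lattice vector $v$''), so your write-up is slightly more complete on that point.
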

\begin{proof} 
For an invertible, symmetric matrix $G$, Jacobi's formula
states that 
\[
\det (G_0 + \delta G) = \det G_0 + 
\det G_0 \langle G^{-1}, \delta G \rangle 
+o\left(\left\Vert \delta G\right\Vert \right). 
\]
Since
\[
\Lambda=4\pi^{2}\left(\det(G)\right)^{-\frac{1}{d}}\left\langle v v^t,G  \right\rangle_F ,
\]
for fixed lattice vector $v$, 
we obtain the desired result using the product rule. 
\end{proof}

We next give a perturbation formula for eigenvalues of greater multiplicity. 
\begin{thm} \label{t:EigPertMult}
Suppose the Laplacian eigenvalue $\lambda$ has even multiplicity $m > 2$ with corresponding lattice vectors given by 
$\pm v_{j} \in \mathbb Z^d$, $j=1,\ldots, \frac{m}{2}$. 
A perturbation of the Gram matrix of the form $G = G_0 + \delta G$ will split the normalized eigenvalue  $\Lambda =  |\det G|^{- \frac 1 d} \lambda$ into up to $\frac{m}{2}$ (un-sorted) normalized eigenvalues (each with multiplicity of at least two) given by 
\[
\Lambda_{j}\left(G_{0}+\delta G\right)=\Lambda\left(G_{0}\right)+\mu_{j} 
+ o\left( \left\Vert \delta G\right\Vert \right), \qquad \qquad j=1,\ldots, \frac{m}{2}
\]
where $\mu_{j}= \langle M_j, \delta G \rangle$ and
\begin{equation}
\label{e:M}
M_j = -\frac{\Lambda}{d} G_0^{-1} + 4\pi^{2}\left( \det G_0 \right)^{-\frac{1}{d}} v_j v_j^t .
\end{equation}
\end{thm}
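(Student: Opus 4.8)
The plan is to use the fact that the spectrum of $-\Delta$ on $T_B$ is computed lattice-vector by lattice-vector: as established at the start of Section~\ref{s:Intro}, for every $G\in\mathcal S^d_{>0}$ the multiset of eigenvalues (with multiplicity) of $-\Delta$ on $T_B$ is $\{4\pi^2\, v^t G v : v\in\mathbb Z^d\}$, so the multiset of volume-normalized eigenvalues is $\{\Lambda^{(v)}(G) : v\in\mathbb Z^d\}$ where $\Lambda^{(v)}(G):=4\pi^2(\det G)^{-1/d}\langle vv^t,G\rangle_F$. Thus the degenerate eigenvalue $\Lambda(G_0)$ is the common value of the $m$ smooth ``branches'' $\Lambda^{(v)}$ indexed by $v\in\{\pm v_1,\dots,\pm v_{m/2}\}$, and --- crucially --- because the lattice $\mathbb Z^d$ is fixed, these branches do not interact: there is no mixing of modes within the eigenspace, which is precisely what makes ordinary degenerate perturbation theory delicate in general. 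This observation reduces the multiple-eigenvalue case to the single-eigenvalue computation applied to each $v_j$ separately.

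With that structure in hand, the first step is to expand each branch. Fixing $v=v_j$ and writing $\Lambda^{(v_j)}(G)=4\pi^2(\det G)^{-1/d}\langle v_jv_j^t,G\rangle_F$, the first-order Taylor expansion at $G_0$ follows verbatim from the proof of Theorem~\ref{t:EigPertSimp}, namely by combining Jacobi's formula for $\det(G_0+\delta G)$ with the product rule; this gives $\Lambda^{(v_j)}(G_0+\delta G)=\Lambda(G_0)+\langle M_j,\delta G\rangle+o(\|\delta G\|)$ with $M_j$ as in \eqref{e:M}, hence $\mu_j=\langle M_j,\delta G\rangle$.

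The second step is to confirm that, for $\|\delta G\|$ small, these are the only eigenvalues of the torus with Gram matrix $G_0+\delta G$ lying near $\Lambda(G_0)$. Since for any $R$ only finitely many $v\in\mathbb Z^d$ satisfy $v^tG_0v\le R$, the value $\Lambda(G_0)$ is separated by a gap $\eta>0$ from all values $\Lambda^{(w)}(G_0)$ with $\pm w\notin\{\pm v_1,\dots,\pm v_{m/2}\}$. On a small enough neighborhood of $G_0$ in $\mathcal S^d_{>0}$ one has $v^tGv\ge c\|v\|^2$ with $c>0$ uniform, so all but finitely many branches stay above $\Lambda(G_0)+\eta/2$, while the remaining finitely many branches are continuous and those indexed outside $\{\pm v_j\}$ therefore stay outside $(\Lambda(G_0)-\eta/2,\Lambda(G_0)+\eta/2)$. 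Hence the eigenvalues in that window are exactly $\{\Lambda^{(\pm v_j)}(G_0+\delta G)\}_{j=1}^{m/2}$, and since $\Lambda^{(v)}\equiv\Lambda^{(-v)}$ they collapse to the (un-sorted) values $\Lambda_j:=\Lambda^{(v_j)}(G_0+\delta G)$, each of multiplicity at least two; the qualifier ``up to $\tfrac m2$'' records that some $\Lambda_j$ may coincide. Combining with the first step yields the claimed expansion.

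The only place requiring care --- and the sole point where a reader might want more detail --- is this last uniformity argument: that the finitely many ``nearby'' branches move continuously while the cofinitely many ``far'' branches remain bounded away from the window, for which the uniform bound $v^tGv\ge c\|v\|^2$ near $G_0$ suffices. Beyond this bookkeeping there is no genuine obstacle, since the discreteness of $\mathbb Z^d$ removes the eigenvector-mixing phenomenon entirely.
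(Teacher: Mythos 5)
Your proof is correct and follows essentially the same route as the paper: you expand each lattice-vector branch $\Lambda^{(v_j)}(G)=4\pi^2(\det G)^{-1/d}\langle v_jv_j^t,G\rangle_F$ to first order via Jacobi's formula exactly as in Theorem~\ref{t:EigPertSimp}. The paper's proof leaves implicit the fact that only the branches indexed by $\pm v_j$ can lie near $\Lambda(G_0)$ after a small perturbation; your separation argument (using discreteness of $\mathbb Z^d$ and the uniform bound $v^tGv\ge c\|v\|^2$ near $G_0$) makes that point explicit but adds no new ideas.
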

\begin{proof} 
The volume-normalized eigenvalue, $\Lambda$, satisfies 
\[
4\pi^{2}\left(\det(G_0)\right)^{-\frac{1}{d}}\left\langle v_{j},G_0 v_{j}\right\rangle =\Lambda, 
\qquad \qquad 
1\le j\le \frac{m}{2}.
\]
Noting that the lattice vectors $v_j$ are fixed, 
the perturbed volume normalized eigenvalues satisfy 
\[
4\pi^{2}\left(\det(G_0+\delta G)\right)^{-\frac{1}{d}}\left\langle v_{j},(G_0+\delta G) v_{j}\right\rangle = \Lambda+ \mu_j + o(\| \delta G\|), 
\qquad \qquad 
1\le j\le \frac{m}{2}.
\]
The first order terms give 
\[
-\frac{4\pi^{2}}{d}\left(\det(G_0)\right)^{-\frac{1}{d}}\, \langle G_0^{-1}, \delta G \rangle 
\left\langle v_{j},G_0 v_{j}\right\rangle +4\pi^{2}\left(\det(G_0)\right)^{-\frac{1}{d}}\left\langle v_{j},\delta G v_{j}\right\rangle =\mu_{j}, 
\qquad \qquad 
1\le j\le \frac{m}{2}.
\]
Thus
\begin{align*}
\mu_{j} & =4\pi^{2}\left(\det(G_0)\right)^{-\frac{1}{d}}\left\{ \left\langle v_{j},\delta Gv_{j}\right\rangle -\frac{1}{d} 
\langle G_0^{-1}, \delta G \rangle 
\left\langle v_{j},G_0 v_{j}\right\rangle \right\},
\end{align*}
as desired.
\end{proof}

Recall that the volume normalized eigenvalue is scale invariant, \ie, $\Lambda(\alpha G_0) = \Lambda(G_0)$ for $\alpha \neq 0$. 
In Theorem~\ref{t:EigPertMult}, if we take $\delta G = \varepsilon G_0$ we obtain $\mu_j =0$ for all $j=1,\ldots, \frac{m}{2}$ and
$\Lambda_j(G_0 + \delta G) = \Lambda(G_0) + o(\|G_0\|)$, as we expect.

We next use Theorem~\ref{t:EigPertMult} to derive two necessary conditions for local optimality in the eigenvalue optimization problem \eqref{e:OptProb}. We say that $G_0$ is a \emph{stationary point} for $\Lambda$ if for every $\delta G$ we have that there exists at least one $j \in \{ 1,\ldots,\frac{m}{2} \}$ such that 
$\mu_{j} \leq 0$. 
We say that $G_0$ is a \emph{strict local maximum} for $\Lambda$ if for every $\delta G$ satisfying $\langle \delta G, G_0 \rangle = 0$ we have at least one $j \in \{ 1,\ldots,\frac{m}{2} \}$ such that $ \mu_{j} < 0$.

\begin{thm} \label{t:linInd}
Using the notation in Theorem~\ref{t:EigPertMult}, a necessary condition for  $G_0$ to be a strict local maximum for $\Lambda$ is that the collection of outer products $\{v_j v_j^t\}_{j =1}^{\frac{m}{2}}$ spans the space of symmetric $\mathbb R^{d \times d}$ matrices. 
\end{thm}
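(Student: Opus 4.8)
The plan is to argue by contrapositive: if $\{v_j v_j^t\}_{j=1}^{m/2}$ fails to span the space $\mathcal{S}^d$ of symmetric $d \times d$ matrices, then we can exhibit a perturbation direction $\delta G$ with $\langle \delta G, G_0 \rangle_F = 0$ along which every $\mu_j \geq 0$, contradicting the definition of a strict local maximum. First I would recall from \eqref{e:M} that $\mu_j = \langle M_j, \delta G\rangle_F$ where $M_j = -\tfrac{\Lambda}{d} G_0^{-1} + 4\pi^2 (\det G_0)^{-1/d} v_j v_j^t$, so that each $\mu_j$ is an affine-linear functional of $\delta G$ built from the fixed matrix $G_0^{-1}$ and the rank-one matrices $v_j v_j^t$.

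The key step is to choose $\delta G$ inside the orthogonal complement (in the Frobenius inner product, restricted to symmetric matrices) of $\mathrm{span}\{v_j v_j^t\}_{j=1}^{m/2}$. By hypothesis this complement is nontrivial, so pick a nonzero symmetric $W$ with $\langle W, v_j v_j^t\rangle_F = 0$ for all $j$. Then for $\delta G = W$ we get $\mu_j = -\tfrac{\Lambda}{d}\langle G_0^{-1}, W\rangle_F$, the \emph{same} value for every $j$. Now I would further adjust $W$ so that this common value is nonnegative: if $\langle G_0^{-1}, W\rangle_F \le 0$ we are done with $\delta G = W$ (all $\mu_j \ge 0$); otherwise replace $W$ by $-W$, which lies in the same orthogonal complement and flips the sign. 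To also meet the constraint $\langle \delta G, G_0\rangle_F = 0$ required in the definition of strict local maximum, I would instead take $\delta G = W - \tfrac{\langle W, G_0\rangle_F}{\langle G_0, G_0\rangle_F} G_0$; adding a multiple of $G_0$ is harmless because, as noted right after Theorem~\ref{t:EigPertMult}, $\delta G = \varepsilon G_0$ contributes $\mu_j = 0$, so the $\mu_j$ for this corrected $\delta G$ equal those for $W$ up to the common shift $-\tfrac{\Lambda}{d}\langle G_0^{-1}, \varepsilon G_0\rangle_F = -\tfrac{\Lambda \varepsilon}{1} \cdot \tfrac{1}{d}\cdot d/\! \det\text{-free} $ — more precisely $\langle G_0^{-1}, G_0\rangle_F = d$, a harmless constant multiple that I again absorb by a final sign choice. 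The upshot: there is a feasible $\delta G$ with $\langle \delta G, G_0\rangle_F = 0$ and $\mu_j \ge 0$ for all $j$, so no single $\mu_j$ is strictly negative, contradicting strict local maximality.

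I expect the main obstacle to be bookkeeping the two simultaneous side conditions — orthogonality to all the $v_j v_j^t$ \emph{and} orthogonality to $G_0$ — while keeping the common value of the $\mu_j$'s of a definite sign. The clean way around this is to observe that the whole argument takes place in the quotient $\mathcal{S}^d / \mathbb{R}G_0$ (which is exactly where the scale-invariance of $\Lambda$ lives): the functionals $\delta G \mapsto \mu_j$ descend to this quotient, and strict local maximality asks that the images of the $M_j$ there positively span the dual. A standard separating-hyperplane / linear-algebra fact then says this forces $\{v_j v_j^t\}$ together with $G_0^{-1}$, modulo $G_0$, to span $\mathcal{S}^d / \mathbb{R} G_0$; since $G_0^{-1}$ is itself generically not in $\mathrm{span}\{v_j v_j^t\}$ one must in fact be slightly careful and the correct conclusion is the stated one — that the $v_j v_j^t$ alone span all of $\mathcal{S}^d$ — which I would extract by noting that $G_0^{-1}$ appears in every $M_j$ with the \emph{same} coefficient, so it cannot help separate different directions and the spanning burden falls entirely on the rank-one pieces. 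Verifying that last reduction carefully is the only genuinely nontrivial point; everything else is the routine perturbation algebra already set up in Theorem~\ref{t:EigPertMult}.
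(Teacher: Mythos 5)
Your argument is essentially the paper's own proof: assuming the outer products do not span, pick $\delta G$ orthogonal to every $v_j v_j^t$, observe that all the $\mu_j$ collapse to the common value $-\tfrac{\Lambda}{d}\langle G_0^{-1},\delta G\rangle_F$, and flip the sign of $\delta G$ if necessary so that $\mu_j \geq 0$ for all $j$. Your additional step of projecting off the $G_0$-component to enforce $\langle \delta G, G_0\rangle_F = 0$ (harmless because $\mu_j$ vanishes on multiples of $G_0$, and the projection is nonzero since $G_0$ positive definite cannot be orthogonal to the $v_jv_j^t$) is a refinement the paper's proof silently omits, while the closing quotient-space speculation is unnecessary for this theorem.
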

\begin{proof}
Otherwise, there exists a matrix $\delta G = A$ so that for all $j =1,\ldots, \frac{m}{2}$, we have 
$$
\langle A, v_j v_j^t \rangle = 0. 
$$
In this case, for every $j =1,\ldots, \frac{m}{2}$, we have
$$
\mu_j = \langle M_j, A \rangle 
= - \frac{\Lambda}{d} \langle G_0^{-1}, A \rangle. 
$$
Changing the sign of $A$ if necessary, we may assume that $\langle G_0^{-1}, A \rangle \leq 0 $, implying $\mu_j \geq 0$ for all  $j =1,\ldots, \frac{m}{2}$.
\end{proof}

\begin{thm} \label{t:NeccCond}
Using the notation in Theorem~\ref{t:EigPertMult}, 
the Gram matrix $G_0 $ is a stationary point for $\Lambda$ if and only if 
there are non-negative coefficients $c_j\geq0$, $j=1,\ldots,\frac{m}{2}$,  not all zero, such that 
$\sum_{j=1}^{\frac{m}{2}} c_j M_j = 0$.
\end{thm}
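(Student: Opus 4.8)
\emph{Approach.} The plan is to read this statement as a theorem of the alternative (Gordan's theorem) in the finite-dimensional inner-product space $\mathcal{S}^d$ of real symmetric $d\times d$ matrices equipped with the Frobenius pairing. Writing $n=\frac m2$ and recalling from Theorem~\ref{t:EigPertMult} that $\mu_j=\langle M_j,\delta G\rangle$ is linear in $\delta G\in\mathcal{S}^d$, the definition of stationary point says precisely that \emph{there is no} $\delta G$ with $\langle M_j,\delta G\rangle>0$ simultaneously for all $j$. So the content to prove is the equivalence of this non-solvability with the existence of $c_j\ge 0$, not all zero, satisfying $\sum_j c_j M_j=0$. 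I would first note that, after dividing by $\sum_j c_j>0$, the latter condition is the same as $0\in K:=\mathrm{conv}\{M_1,\ldots,M_n\}$.

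\emph{The easy direction.} For the implication ``coefficients exist $\Rightarrow$ stationary'', I would argue directly: given $\sum_j c_j M_j=0$ with $c_j\ge 0$ and some $c_{j_0}>0$, pairing with an arbitrary $\delta G$ gives $\sum_j c_j\langle M_j,\delta G\rangle=0$, and this forbids $\langle M_j,\delta G\rangle>0$ for all $j$ at once; hence some $\mu_j\le 0$, which is stationarity.

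\emph{The other direction, and the main obstacle.} For ``stationary $\Rightarrow$ coefficients exist'' I would use convex separation in contrapositive form: if $0\notin K$, then $K$ --- being the convex hull of finitely many points --- is compact and convex, so the separating hyperplane theorem yields $X\in\mathcal{S}^d$ with $\inf_{M\in K}\langle M,X\rangle>\langle 0,X\rangle=0$, in particular $\langle M_j,X\rangle>0$ for every $j$; choosing $\delta G=X$ then contradicts stationarity. The ``hard part'' is not genuinely hard: it is just a matter of invoking the correct separation result --- strict inequalities on one side call for Gordan's theorem / strict separation of a point from a compact convex set, not Farkas' lemma --- and of working in $\mathcal{S}^d$ (harmless here since each $M_j$ is symmetric). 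I would also record the consistency check $\langle M_j,G_0\rangle=0$ for all $j$, which follows from $\langle G_0^{-1},G_0\rangle=d$ and $v_j^t G_0 v_j=\frac{\Lambda}{4\pi^2}(\det G_0)^{1/d}$ and reflects the scale-invariance noted after Theorem~\ref{t:EigPertMult}; it places the whole cone $\mathrm{cone}\{M_j\}$ inside the hyperplane $G_0^{\perp}$, consistent with --- though not needed for --- the argument above.
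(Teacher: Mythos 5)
Your proposal is correct and follows essentially the same route as the paper: the paper also reduces stationarity to the non-solvability of the strict system $\langle M_j,\delta G\rangle>0$ for all $j$ and then invokes Gordan's Alternative Theorem for the linear map $U(\delta G)=(\langle M_j,\delta G\rangle)_j$ with adjoint $U^*c=\sum_j c_jM_j$. The only difference is that you prove Gordan's alternative inline (easy direction directly, hard direction by strict separation of $0$ from $\mathrm{conv}\{M_j\}$) where the paper simply cites it, which is a presentational rather than a substantive distinction.
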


\begin{proof}
We consider the linear map $U \colon \mathbb S^d_{++} \to \mathbb R^{\frac{m}{2}}$ defined by 
$$
U_j(G) := \mu_j(G) = \langle M_j, G \rangle_F,
\qquad \qquad j = 1, \ldots, \frac{m}{2}. 
$$
To find the adjoint map of $U$, denoted $U^* \colon \mathbb R^{\frac{m}{2}} \to \mathbb S^d_{++}$, for 
$c \in \mathbb R^{\frac{m}{2}}$, 
we compute 
$$
\langle U_j(G), c \rangle_{\mathbb R^{\frac{m}{2}}} 
= \sum_{j=1}^{\frac{m}{2}} c_j \langle M_j, G\rangle_F = \left\langle  \left( \sum_{j=1}^{\frac{m}{2}} c_j M_j \right), G \right\rangle_F, 
$$ 
so that 
$U^* c = \sum_{j=1}^{\frac{m}{2}} c_j M_j$.

Stationarity of $G_0$ means that
 $\delta G \mapsto U(\delta G) \in \mathbb R^{\frac m 2}$ has at least one non-positive component for every $\delta G$, \ie, there is no solution to the linear system 
\begin{equation} 
\label{e:Gordan1}
U(\delta G) > 0.
\end{equation}

We recall Gordan's Alternative Theorem (see, \eg, \cite[Thm. 10.4]{Beck_2014}) which states that either \eqref{e:Gordan1} has a solution or  
\begin{equation} 
\label{e:Gordan2}
U^* c = 0, 
\quad 
c \geq 0, \ c \neq 0
\end{equation}
has a solution. 
Thus it is enough to show that there is a non-trivial, non-negative $c \in \textrm{ker}(U^*)$.
\end{proof}

\begin{rem}
For  $k=1$, the eigenvalue optimization problem \eqref{e:OptProb} is equivalent to
Hermite's constant \eqref{Hermite} and 
determining the densest lattice sphere packing. It was shown by Voronoi that a lattice gives the densest lattice sphere packing if and only if it is perfect and eutactic \cite[Thm. 3.9]{schurmann2009computational}. It can be seen that the necessary conditions here imply  Theorems~\ref{t:linInd} and \ref{t:NeccCond}  for  $k=1$. Note that the lack of convexity for higher eigenvalues makes a sufficiency condition more difficult to state.
\end{rem}

\section{Properties of flat tori, \texorpdfstring{$T^\circ_{k,d}$}{T}, and degeneracy as \texorpdfstring{$k\to \infty$}{k tends to infinity}} \label{s:CompRes}

In this section, we show that 
$G^\circ_{k,d}$ for $d\leq 8$ and $k\geq 1$ satisfies the necessary condition for strict local maximum given in Theorem~\ref{t:linInd} (see Section~\ref{s:LinInd}) and
provide numerical evidence that it satisfies the necessary condition for stationarity in 
Theorem~\ref{t:NeccCond} (see Section~\ref{s:LocOpt}). In Section~\ref{s:Degeneracy}, we describe the degeneracy of flat tori $T^\circ_{k,d}$ as $k\to \infty$.

\subsection{Linear Independence} \label{s:LinInd}
\begin{thm}
Let $\{v_j \}_{j = 1}^{\frac m 2}$ be the collection of lattice vectors gives in Table~\ref{t:LattVecs}. 
The collection of outer products $\{v_j v_j^t\}_{j =1}^{\frac{m}{2}}$ spans the space of symmetric $\mathbb R^{d \times d}$ matrices. 
Consequently, $G^\circ_{k,d}$ for $d\leq 8$ and $k\geq 1$ satisfy the necessary conditions for a strict local maximum given in Theorem~\ref{t:linInd}. 
\end{thm}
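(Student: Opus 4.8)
The plan is to reduce the statement to a finite, exact linear-algebra check, performed separately in each dimension $d=1,\dots,8$. The target space $\mathrm{Sym}(\mathbb R^{d\times d})$ has dimension $\binom{d+1}{2}\le 36$ for $d\le 8$, whereas the number $\frac m2$ of lattice-vector pairs recorded in Table~\ref{t:LattVecs} (for $d<8$, those vanishing in the first $8-d$ coordinates) is always at least this large; so the assertion is a dimension count to be substantiated by an explicit rank computation. What makes the computation finite is that, by Numerical Observation~\ref{p:OptLattice}, the $k$-shortest vectors of $B^\circ_{k,d}$ are exactly those in Table~\ref{t:LattVecs}, and these are independent of $k$ except for the first one, $\bigl(0,\dots,0,\lceil k/2\rceil\bigr)=\tfrac\kappa2\,e_d$, whose outer product $\tfrac{\kappa^2}{4}e_d e_d^t$ spans the same line of $\mathrm{Sym}(\mathbb R^{d\times d})$ for every $k$. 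Since $G^\circ_{k,d}$ depends on $k$ only through $\kappa=2\lceil k/2\rceil$, there are exactly two regimes to treat: $\kappa=2$ (i.e.\ $k=1,2$), described by the first column of Table~\ref{t:LattVecs}, and $\kappa\ge4$ (i.e.\ $k\ge3$), described by the second column, where the list of vectors is literally the same for all $k\ge3$.

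For each $d$ and each of the two vector lists I would form the matrix whose columns are the upper-triangular vectorizations $\bigl((v_jv_j^t)_{pq}\bigr)_{p\le q}$ of the outer products and verify that its rank equals $\binom{d+1}{2}$ over $\mathbb Q$. Since all entries are small integers this is an exact computation; concretely one picks $\binom{d+1}{2}$ of the vectors $v_j$ and exhibits a nonzero $\binom{d+1}{2}\times\binom{d+1}{2}$ minor of the associated submatrix. For $d\le 3$ this is immediate by hand --- for $d=2$, for instance, the three vectors $\pm(0,1)$ (the rescaled leading vector of the table), $\pm(1,0)$, and $\pm(1,1)$ already span $\mathrm{Sym}(\mathbb R^{2\times2})$ --- while for $d=7,8$ it is cleanest to run the rank check in exact arithmetic by machine, as in the supporting Sage code at \cite{github}. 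Once full rank is confirmed in all of these (at most sixteen) cases, Theorem~\ref{t:linInd} immediately gives that $G^\circ_{k,d}$ satisfies the stated necessary condition for a strict local maximum of $\Lambda$ for all $k\ge1$ and $d\le8$.

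There is no deep obstacle here: modulo Numerical Observation~\ref{p:OptLattice}, which supplies the spanning set, the content is the finite family of exact rank computations above. The one place where care is needed is the reduction to finitely many cases --- that is, confirming that the only $k$-dependence in the relevant outer products is the harmless rescaling of $e_de_d^t$, so that verifying the spanning property for a single representative value of $\kappa$ in each regime suffices for all $k$.
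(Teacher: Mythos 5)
Your plan is in essence the paper's own proof: reduce the spanning claim to exhibiting a nonsingular $\binom{d+1}{2}\times\binom{d+1}{2}$ submatrix of the vectorized outer products, and observe that the only $k$-dependence lives in the leading vector's scaling of $e_de_d^t$, so a single representative computation covers all $k$. Where the paper goes further is in making the computation uniform and hand-verifiable: it selects a specific index set $J$ of exactly $36$ lattice vectors, one new batch of $d$ for each dimension, arranged so that the $36\times36$ matrix $F$ of vectorized outer products is anti-block-lower-triangular. Its determinant is then just the product of the anti-diagonal block determinants, and a Laplace expansion yields $|\det F|=\kappa^2/4\neq0$ for all $k$; and because each $F_d$ sits as the lower-left anti-block of $F$, nonvanishing of $\det F$ simultaneously gives $\det F_d\neq0$ for every $d\leq8$. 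So there is in fact only one determinant to check, not eight dimensions nor two regimes — your ``two regimes'' split is harmless but redundant, since the $k\ge3$ vector list is (modulo the scaling of the first vector) a subset of the $k=1,2$ list, so spanning in the former already implies spanning in the latter. Your deferral of the $d=7,8$ rank checks to exact machine arithmetic is sound, just less self-contained than the paper's explicit anti-triangular determinant; to turn your sketch into a full proof you would still need to exhibit the choice of $\binom{d+1}{2}$ vectors and the nonzero minor.
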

\begin{proof}
We only need $\binom{d}{2}\leq \frac{m}{2}$ outer products $v_j v_j^t$ to span the space of symmetric matrices, so for dimensions, $d = 4,5,6,7,8$, it is not necessary to use all of the lattice vectors listed in Table~\ref{t:LattVecs}. The lattice vector indices we use are given by
$$
J = \{
1 \mid  
2, 3 \mid  
4:6 \mid  
7:10 \mid 
12:14, 16, 17 \mid 
20:25 \mid
32:37, 49 \mid
54, 56, 64, 66, 67, 82, 83, 84\}. 
$$
Here, the vertical lines correspond to the horizontal lines in Table~\ref{t:LattVecs} and identify the dimension that the lattice vector first appears. For each dimension $d$, we reshape the upper triangular part of the matrices $v_j v_j^t$ into vectors and stacking the vectors as columns of a  matrix $F_d \in \mathbb R^{\binom{d}{2} \times \binom{d}{2}}$. The matrices $F_d$ are the lower left $\binom{d}{2} \times \binom{d}{2}$ block of the following matrix, $F \in \mathbb R^{36 \times 36}$, where to reduce size we use the shorthand 
$\blacktriangle = 1$, 
$\blacktriangledown = -1$, and 
$\blacksquare = \frac{\kappa^2}{4}$, 

{\small
\[F = 
\left[
\arraycolsep=2pt
\begin{array}{r|rr|rrr|rrrr|rrrrr|rrrrrr|rrrrrrr|rrrrrrrr}
 0& 0& 0& 0& 0& 0& 0&  0&  0&  0& 0&  0&  0& 0& 0& 0&  0&  0&  0&  0&  0& 0&  0&  0&  0&  0&  0& 0& \blacktriangle&  \blacktriangle&  \blacktriangle&  \blacktriangle&  \blacktriangle&  \blacktriangle&  \blacktriangle& \blacktriangle\\
 0& 0& 0& 0& 0& 0& 0&  0&  0&  0& 0&  0&  0& 0& 0& 0&  0&  0&  0&  0&  0& 0&  0&  0&  0&  0&  0& 0& 0&  0&  0& \blacktriangledown& \blacktriangledown&  0&  0& 0\\
 0& 0& 0& 0& 0& 0& 0&  0&  0&  0& 0&  0&  0& 0& 0& 0&  0&  0&  0&  0&  0& 0&  0&  0&  0&  0&  0& 0& 0&  0& \blacktriangledown& \blacktriangledown&  0&  0&  0& 0\\
 0& 0& 0& 0& 0& 0& 0&  0&  0&  0& 0&  0&  0& 0& 0& 0&  0&  0&  0&  0&  0& 0&  0&  0&  0&  0&  0& 0& 0&  0&  0&  0&  0&  0& \blacktriangledown& 0\\
 0& 0& 0& 0& 0& 0& 0&  0&  0&  0& 0&  0&  0& 0& 0& 0&  0&  0&  0&  0&  0& 0&  0&  0&  0&  0&  0& 0& 0&  0&  0&  0&  0& \blacktriangledown& \blacktriangledown& 0\\
 0& 0& 0& 0& 0& 0& 0&  0&  0&  0& 0&  0&  0& 0& 0& 0&  0&  0&  0&  0&  0& 0&  0&  0&  0&  0&  0& 0& 0& \blacktriangledown&  0&  0&  0&  0&  0& 0\\
 0& 0& 0& 0& 0& 0& 0&  0&  0&  0& 0&  0&  0& 0& 0& 0&  0&  0&  0&  0&  0& 0&  0&  0&  0&  0&  0& 0& 0& \blacktriangledown& \blacktriangledown&  0&  0&  0&  0& 0\\
 0& 0& 0& 0& 0& 0& 0&  0&  0&  0& 0&  0&  0& 0& 0& 0&  0&  0&  0&  0&  0& 0&  0&  0&  0&  0&  0& 0& 0&  0&  0&  0&  0&  0&  0& \blacktriangle\\
 \hline
 0& 0& 0& 0& 0& 0& 0&  0&  0&  0& 0&  0&  0& 0& 0& 0&  0&  0&  0&  0&  0& \blacktriangle&  \blacktriangle&  \blacktriangle&  \blacktriangle&  \blacktriangle&  \blacktriangle& \blacktriangle& 0&  0&  0&  \blacktriangle&  \blacktriangle&  0&  0& 0\\
 0& 0& 0& 0& 0& 0& 0&  0&  0&  0& 0&  0&  0& 0& 0& 0&  0&  0&  0&  0&  0& 0&  0&  0&  0&  0& \blacktriangledown& 0& 0&  0&  0&  \blacktriangle&  0&  0&  0& 0\\
 0& 0& 0& 0& 0& 0& 0&  0&  0&  0& 0&  0&  0& 0& 0& 0&  0&  0&  0&  0&  0& 0&  0&  0& \blacktriangledown&  0&  0& 0& 0&  0&  0&  0&  0&  0&  0& 0\\
 0& 0& 0& 0& 0& 0& 0&  0&  0&  0& 0&  0&  0& 0& 0& 0&  0&  0&  0&  0&  0& 0&  0&  0&  0&  \blacktriangle&  \blacktriangle& 0& 0&  0&  0&  0&  0&  0&  0& 0\\
 0& 0& 0& 0& 0& 0& 0&  0&  0&  0& 0&  0&  0& 0& 0& 0&  0&  0&  0&  0&  0& 0& \blacktriangledown&  0&  0&  0&  0& 0& 0&  0&  0&  0&  0&  0&  0& 0\\
 0& 0& 0& 0& 0& 0& 0&  0&  0&  0& 0&  0&  0& 0& 0& 0&  0&  0&  0&  0&  0& 0& \blacktriangledown& \blacktriangledown& \blacktriangledown& \blacktriangledown& \blacktriangledown& 0& 0&  0&  0&  0&  0&  0&  0& 0\\
 0& 0& 0& 0& 0& 0& 0&  0&  0&  0& 0&  0&  0& 0& 0& 0&  0&  0&  0&  0&  0& 0&  0&  0&  0&  0&  0& \blacktriangle& 0&  0&  0&  0&  0&  0&  0& 0\\
 \hline
 0& 0& 0& 0& 0& 0& 0&  0&  0&  0& 0&  0&  0& 0& 0& \blacktriangle&  \blacktriangle&  \blacktriangle&  \blacktriangle&  \blacktriangle&  \blacktriangle& 0&  0&  0&  0&  0&  \blacktriangle& 0& 0&  0&  \blacktriangle&  \blacktriangle&  0&  0&  0& 0\\
 0& 0& 0& 0& 0& 0& 0&  0&  0&  0& 0&  0&  0& 0& 0& 0& \blacktriangledown&  0& \blacktriangledown&  0&  0& 0&  0&  0&  0&  0&  0& 0& 0&  0&  0&  0&  0&  0&  0& 0\\
 0& 0& 0& 0& 0& 0& 0&  0&  0&  0& 0&  0&  0& 0& 0& 0& \blacktriangledown& \blacktriangledown& \blacktriangledown& \blacktriangledown& \blacktriangledown& 0&  0&  0&  0&  0& \blacktriangledown& 0& 0&  0&  0&  0&  0&  0&  0& 0\\
 0& 0& 0& 0& 0& 0& 0&  0&  0&  0& 0&  0&  0& 0& 0& 0&  \blacktriangle&  0&  0&  0&  0& 0&  0&  0&  0&  0&  0& 0& 0&  0&  0&  0&  0&  0&  0& 0\\
 0& 0& 0& 0& 0& 0& 0&  0&  0&  0& 0&  0&  0& 0& 0& 0&  0&  0&  0&  \blacktriangle&  \blacktriangle& 0&  0&  0&  0&  0&  \blacktriangle& 0& 0&  0&  \blacktriangle&  0&  0&  0&  0& 0\\
 0& 0& 0& 0& 0& 0& 0&  0&  0&  0& 0&  0&  0& 0& 0& 0&  0&  0&  0&  \blacktriangle&  0& 0&  0&  0&  0&  0&  0& 0& 0&  0&  0&  0&  0&  0&  0& 0\\
 \hline
 0& 0& 0& 0& 0& 0& 0&  0&  0&  0& \blacktriangle&  \blacktriangle&  \blacktriangle& \blacktriangle& \blacktriangle& 0&  \blacktriangle&  0&  \blacktriangle&  0&  0& 0&  0&  0&  \blacktriangle&  0&  0& 0& 0&  0&  0&  0&  0&  0&  \blacktriangle& 0\\
 0& 0& 0& 0& 0& 0& 0&  0&  0&  0& 0&  0&  \blacktriangle& 0& 0& 0&  \blacktriangle&  0&  \blacktriangle&  0&  0& 0&  0&  0&  0&  0&  0& 0& 0&  0&  0&  0&  0&  0&  \blacktriangle& 0\\
 0& 0& 0& 0& 0& 0& 0&  0&  0&  0& 0& \blacktriangledown& \blacktriangledown& 0& 0& 0& \blacktriangledown&  0&  0&  0&  0& 0&  0&  0&  0&  0&  0& 0& 0&  0&  0&  0&  0&  0&  0& 0\\
 0& 0& 0& 0& 0& 0& 0&  0&  0&  0& 0&  0&  0& \blacktriangle& \blacktriangle& 0&  0&  0&  0&  0&  0& 0&  0&  0&  \blacktriangle&  0&  0& 0& 0&  0&  0&  0&  0&  0&  0& 0\\
 0& 0& 0& 0& 0& 0& 0&  0&  0&  0& 0&  0&  0& \blacktriangle& 0& 0&  0&  0&  0&  0&  0& 0&  0&  0&  0&  0&  0& 0& 0&  0&  0&  0&  0&  0&  0& 0\\
 \hline
 0& 0& 0& 0& 0& 0& \blacktriangle&  \blacktriangle&  \blacktriangle&  \blacktriangle& 0&  0&  \blacktriangle& 0& 0& 0&  \blacktriangle&  \blacktriangle&  \blacktriangle&  \blacktriangle&  \blacktriangle& 0&  0&  0&  0&  \blacktriangle&  \blacktriangle& 0& 0&  0&  0&  0&  0&  \blacktriangle&  \blacktriangle& 0\\
 0& 0& 0& 0& 0& 0& 0&  0& \blacktriangledown&  0& 0&  0& \blacktriangledown& 0& 0& 0& \blacktriangledown&  0&  0&  0&  0& 0&  0&  0&  0&  0&  0& 0& 0&  0&  0&  0&  0&  0&  0& 0\\
 0& 0& 0& 0& 0& 0& 0& \blacktriangledown& \blacktriangledown& \blacktriangledown& 0&  0&  0& 0& 0& 0&  0&  0&  0& \blacktriangledown& \blacktriangledown& 0&  0&  0&  0& \blacktriangledown& \blacktriangledown& 0& 0&  0&  0&  0&  0&  0&  0& 0\\
 0& 0& 0& 0& 0& 0& 0&  0&  0& \blacktriangledown& 0&  0&  0& 0& 0& 0&  0&  0&  0& \blacktriangledown&  0& 0&  0&  0&  0&  0&  0& 0& 0&  0&  0&  0&  0&  0&  0& 0\\
 \hline
 0& 0& 0& \blacktriangle& \blacktriangle& \blacktriangle& 0&  0&  \blacktriangle&  0& 0&  \blacktriangle&  \blacktriangle& 0& 0& 0&  \blacktriangle&  0&  0&  0&  0& 0&  \blacktriangle&  0&  0&  0&  0& 0& 0&  \blacktriangle&  0&  0&  0&  0&  0& 0\\
 0& 0& 0& 0& \blacktriangle& \blacktriangle& 0&  0&  \blacktriangle&  0& 0&  0&  0& 0& 0& 0&  0&  0&  0&  0&  0& 0&  \blacktriangle&  0&  0&  0&  0& 0& 0&  \blacktriangle&  0&  0&  0&  0&  0& 0\\
 0& 0& 0& 0& 0& \blacktriangle& 0&  0&  0&  0& 0&  0&  0& 0& 0& 0&  0&  0&  0&  0&  0& 0&  0&  0&  0&  0&  0& 0& 0&  0&  0&  0&  0&  0&  0& 0\\
 \hline
 0& \blacktriangle& \blacktriangle& 0& \blacktriangle& \blacktriangle& 0&  \blacktriangle&  \blacktriangle&  \blacktriangle& 0&  0&  0& \blacktriangle& \blacktriangle& 0&  0&  0&  0&  \blacktriangle&  \blacktriangle& 0&  \blacktriangle&  \blacktriangle&  \blacktriangle&  \blacktriangle&  \blacktriangle& 0& 0&  \blacktriangle&  \blacktriangle&  0&  0&  0&  0& 0\\
 0& 0& \blacktriangle& 0& 0& \blacktriangle& 0&  0&  0&  \blacktriangle& 0&  0&  0& \blacktriangle& 0& 0&  0&  0&  0&  \blacktriangle&  0& 0&  0&  0&  0&  0&  0& 0& 0&  0&  0&  0&  0&  0&  0& 0\\
 \hline
\blacksquare& 0& \blacktriangle& 0& 0& \blacktriangle& 0&  0&  0&  \blacktriangle& 0&  0&  0& \blacktriangle& 0& 0&  0&  0&  0&  \blacktriangle&  0& 0&  0&  0&  0&  0&  0& \blacktriangle& 0&  0&  0&  0&  0&  0&  0& \blacktriangle
\end{array}
\right].
\]}
We will show that the matrices $F_d$ have non-zero determinant, and hence the outer products are linearly independent. If the matrix $F$ has non-zero determinant, then  $F_d$ for each $d\leq 8$ also has non-zero determinant. Observing that the upper left submatrix blocks of $F$ are zero, we see that the determinant of $F$ is the product of the lower-left to upper-right diagonal sub-blocks of the matrix $F$. Judiciously choosing the minors in the Laplace expansion for the determinant, we obtain $|\det F| = |\det F_d| = \frac{\kappa^2}{4} \neq 0$. 
\end{proof}

\subsection{Stationarity of tori} \label{s:LocOpt}
Here, for each $k \geq 1$ and $2\leq d \leq 8$, we give a vector $c = c(k,d)  \in \mathbb R^{\frac m 2}$, that satisfies the stationarity condition given in  Theorem~\ref{t:NeccCond}.
We first observe that such a vector $c$, if one exists, is not unique for $d = 4,5,6,7,8$, as the following argument shows. Reshaping the symmetric matrices $M_j$ as defined in \eqref{e:M} into vectors of length $\binom{d}{2}$, the condition for stationarity in Theorem~\ref{t:NeccCond} is that a non-negative linear combination gives zero. Of course, if the number of vectors, $\frac{m}{2}$,  exceeds $\binom{d}{2}$, \ie, $m > d(d+1)$, then the columns are linearly dependent. Looking at Table~\ref{t:EigTable}, this is the case for d = 4,5,6,7,8.

\begin{table}[t]
\begin{tabular}{c| c c c c c c c}
$d$ & 2 & 3 & 4 & 5 & 6 & 7 & 8 \\
\hline 
$a_{k,d}$ & 
$\frac{2 \kappa^2-4}{\kappa^2}$ & 
$\frac{3 \kappa^2 -8}{ \kappa^2}$ & 
$4 \frac{\kappa^2 - 4 }{\kappa^2 }$ & 
$6 \frac{ \kappa^2 - 4}{\kappa^2}$ & 
$8 \frac{ \kappa^2 - 5} {\kappa^2}$ & 
$4 \frac{ 3 \kappa^2 - 16} {\kappa^2}$ & 
$18 \frac{ \kappa^2 - 6} {\kappa^2}$
\\
$b_{k,d}$ & 
$\cdot$ & 
$\frac{2 \kappa^2-4} {\kappa^2}$ & 
$2 \frac{\kappa^2 - 4}{\kappa^2 }$ &
$2 \frac{ \kappa^2 - 3} {\kappa^2}$ & 
$2 \frac{ \kappa^2 - 4} {\kappa^2}$ & 
$2 \frac{ \kappa^2 - 4} {\kappa^2}$ & 
$\frac{ 2 \kappa^2 - 9} {\kappa^2}$
\end{tabular}
\caption{The values of $a_{k,d}$ and $b_{k,d}$ used in the definition of the vector $c^\circ$. See Section~\ref{s:LocOpt}.}
\label{t:c} 
\end{table}

For $k=1,2$, for every $2\leq d \leq 8$, 
define 
$c^\circ =( 1, \cdots, 1)\in \mathbb R^{\frac m 2}$.
For  $k\geq 3$ and $2\leq d \leq 8$, 
define the vector $c^\circ \in \mathbb R^{\frac m 2}$ by 
$$
c^\circ_i = \begin{cases}
a_{k,d} & i =1  \\ 
b_{k,d} & i \in I \\ 
1 & \textrm{otherwise}, 
\end{cases}
$$
where the constants $a_{k,d}$ and $b_{k,d}$ are specified in Table~\ref{t:c} and the index set $I$ is defined 
\begin{align*}
I :=  
\{ &4 \mid  7 \mid 
12 : 15 \mid 
20 : 23 \mid 
33 : 42 \mid   
55 : 70\}.
\end{align*}
The indices in $I$ correspond to the lattice vectors in Table~\ref{t:LattVecs}, where the indices in $I$ are displayed in red. The vertical lines here correspond to the horizontal lines in Table~\ref{t:LattVecs} and identify the dimension that the lattice vector first appears. 

\begin{no} \label{no:Stat}
For every $k\geq 1$, and  $2\leq d \leq 8$, 
the vector $c^\circ \in \mathbb R^{\frac m 2}$ satisfies the stationarity condition given in  Theorem~\ref{t:NeccCond}. 
\end{no}

It is straightforward to check that Numerical Observation~\ref{no:Stat} holds. Sage code that symbolically verifies the claim is provided in \cite{github}. Comments on how we first identified $c^\circ \in \mathbb R^{\frac m 2}$ are made in Appendix~\ref{s:c}.

As an example, we verify Numerical Observation~\ref{no:Stat} in dimension $d=3$.  We have
$G^\circ_{k,d} = 
\begin{pmatrix}
2 \kappa^2 & - \kappa^2 & 0 \\
- \kappa^2 & 2 \kappa^2 & -4 \\
0 & -4 & 8 \kappa^2 
\end{pmatrix}$
so that 
$$
(G^\circ_{k,d})^{-1} = \frac{1}{8 (3 \kappa^2 - 4)} 
\begin{pmatrix}
16 \frac{\kappa^2 - 1}{\kappa^2} & 8 & 4 \\
8 & 16 & 8 \\
4 & 8 & 3 \kappa^2 
\end{pmatrix},
$$
and 
$\det G^\circ_{k,d} = 8 \kappa^2 (3 \kappa^2 - 4)$. 
We also have 
$\Lambda^\circ_{k,3} = 
4 \pi^2 \left( \frac{\kappa^{4}}{3 \kappa^{2}- 4} \right)^{\frac{1}{3}} = 8 \pi^2 \kappa^2 \left( \det G^\circ_{k,d} \right)^{-1/d}$ 
(see Table~\ref{t:EigTable}),  
and, from Table~\ref{t:LattVecs} and Table~\ref{t:c},
\begin{align*}
c_1^\circ & = \frac{3 \kappa^2 -8}{ \kappa^2}    
&&v_1 v_1^t = 
\begin{pmatrix}
0 & 0 & 0 \\
0 & 0 & 0 \\
0 & 0 & \frac{\kappa^2}{4} 
\end{pmatrix} \\
c_2^\circ & = 1    
&&v_2 v_2^t = 
\begin{pmatrix}
0 & 0 & 0 \\
0 & 1 & 0 \\
0 & 0 & 0
\end{pmatrix} \\
c_3^\circ & = 1    
&&v_3 v_3^t = 
\begin{pmatrix}
0 & 0 & 0 \\
0 & 1 & 1 \\
0 & 1 & 1
\end{pmatrix} \\
c_4^\circ &=  \frac{2 \kappa^2-4} {\kappa^2}
&& v_4 v_4^t = 
\begin{pmatrix}
1 & 0 & 0 \\
0 & 0 & 0 \\
0 & 0 & 0
\end{pmatrix} \\
c_5^\circ & = 1    
&&v_5 v_5^t = 
\begin{pmatrix}
1 & 1 & 0 \\
1 & 1 & 0 \\
0 & 0 & 0
\end{pmatrix} \\
c_6^\circ & = 1    
&&v_6 v_6^t = 
\begin{pmatrix}
1 & 1 & 1 \\
1 & 1 & 1 \\
1 & 1 & 1
\end{pmatrix}. 
\end{align*}
We can then compute 
\begin{align*} 
\sum_{i=1}^6 c_i^\circ M_i 
&= 
- \frac{\Lambda^\circ_{k,3}}{d} \left( \sum_{i=1}^6 c_i^\circ  \right) (G^\circ_{k,d})^{-1} + 
4 \pi^2 \left( \det G^\circ_{k,d} \right)^{- \frac 1 d} 
\sum_{i=1}^6 c_i^\circ v_i v_i^t. 
\end{align*}
Since $\sum_{i=1}^6 c_i^\circ =3 \left( \frac{3 \kappa^2 - 4 }{\kappa^2}  \right) $ and 
$$\sum_{i=1}^6 c_i^\circ v_i v_i^t = 
\begin{pmatrix}
4(\kappa^2 - 1)/\kappa^2 & 2 & 1 \\
2 & 4 & 2 \\
1 & 2 & 3 \kappa^2/4
\end{pmatrix}, 
$$
we obtain $\sum_{i=1}^6 c_i^\circ M_i  =  0$.

\subsection{Degeneracy of flat tori as \texorpdfstring{$k\to \infty$}{k tends to infinity}} \label{s:Degeneracy}
For fixed $k \in \mathbb N$ and $d\leq8$, denote the eigenvalues of the normalized Gram matrix
$\frac{G_{k,d}^\circ}{\textrm{det}\left( G_{k,d}^\circ \right)^{\frac{1}{d}}}$ by 
$\mu^k_1 \leq \mu^k_2 \leq \cdots \leq \mu^k_d$. 
In Figure~\ref{f:degenTori}, we plot 
$k$ vs $\mu^k_1,\ldots, \mu^k_d$ for $d=2,\ldots, 8$. 
From  Figure~\ref{f:degenTori}, we hypothesize that, in each dimension, for large $k$,
$$
\mu_i^k
 \sim c_i k^{2 \cdot p_i/d}, 
$$
for constants $c_i, p_i$ that are independent of $k$. In particular, $p_1 = -(d-1)$ and  $p_2=\cdots = p_d = 1$, which necessarily satisfy $\sum_{i=1}^d p_i = 0$.

\begin{figure}
\centering
\includegraphics[width=0.8\textwidth,trim={0 1.5cm 0 1.5cm},clip]{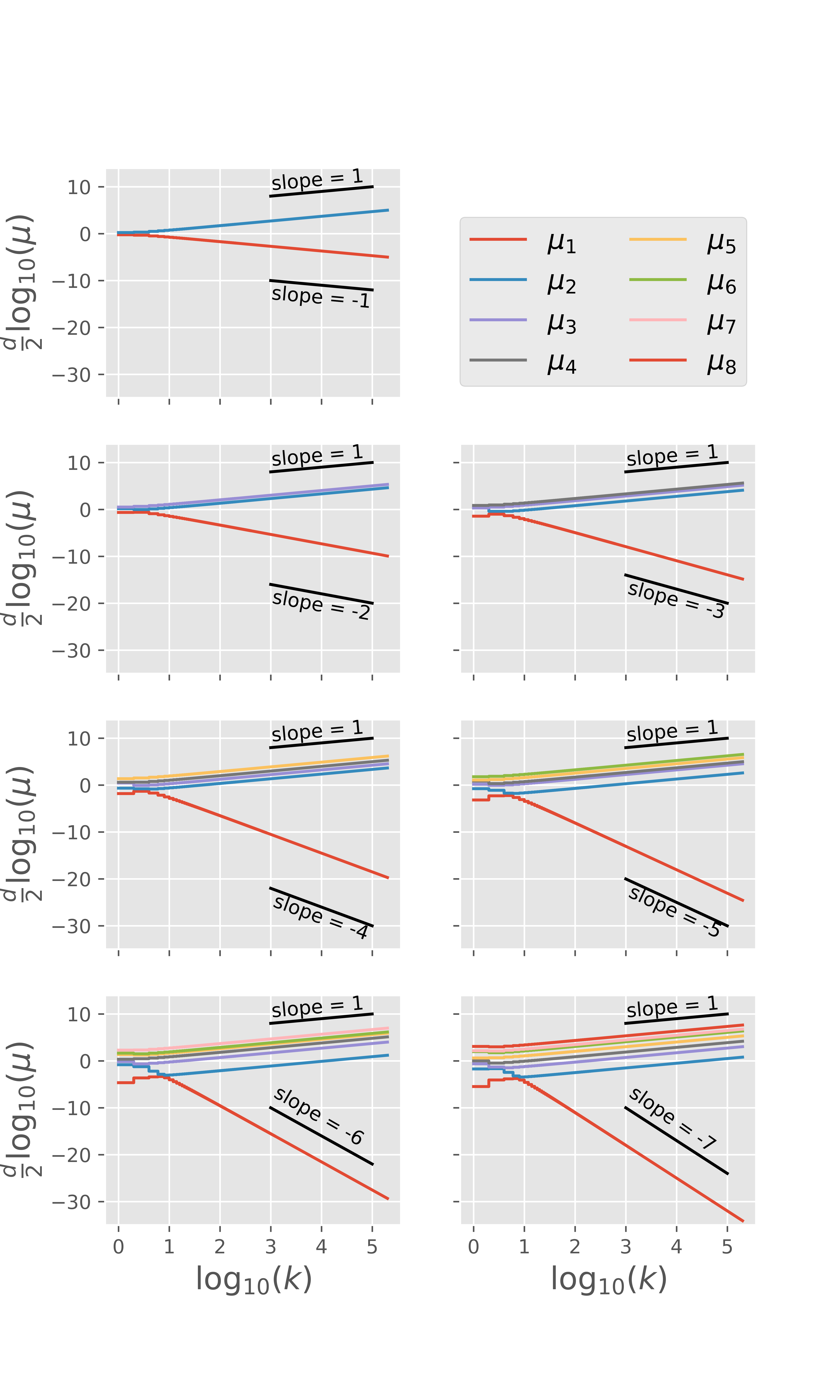}
\caption{\label{f:degenTori} Log-log plots of $k$ vs the eigevalues, $\mu$, of the Gram matrix, $G_{k,d}^\circ$ for dimensions $d=2,\ldots,8$.}
\end{figure}

In Figure \ref{f:ellipsoid}, we give further geometric interpretation of $v^t G^\circ_{k,d} v = 2 \kappa^2$ in dimensions $d=2$ and $3$. For $k = 1,3,5,7$, we plot the ellipses/ellipsoids corresponding to the Gram matrix, as well as the $k$-th shortest lattice vectors.  In $d=2$ dimensions, the ellipses intersect  six lattice points and elongate as $k$ increases in one direction. In $d=3$ dimensions, the ellipsoids intersect 12 lattice points and again elongate in one direction.   In both cases, the elongation in one direction corresponds to the first eigenvalue of the Gram matrix scaling as $\mu_1(k) \sim c_1 k^{- 2(d-1)/d}$ and the other eigenvalues scaling as $\mu_i(k)  \sim c_i k^{2/d}$.

\begin{figure}
\centering
\includegraphics[width=0.8\textwidth]{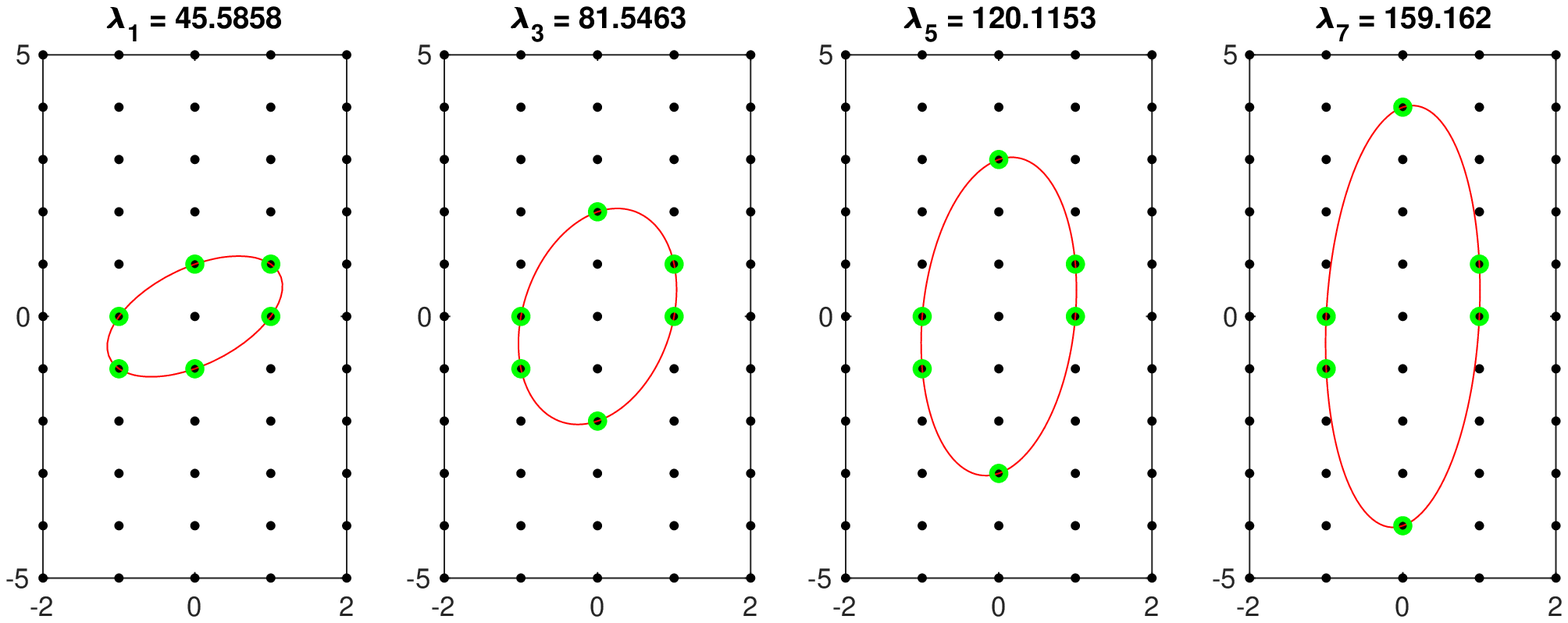}
\includegraphics[width=1\textwidth]{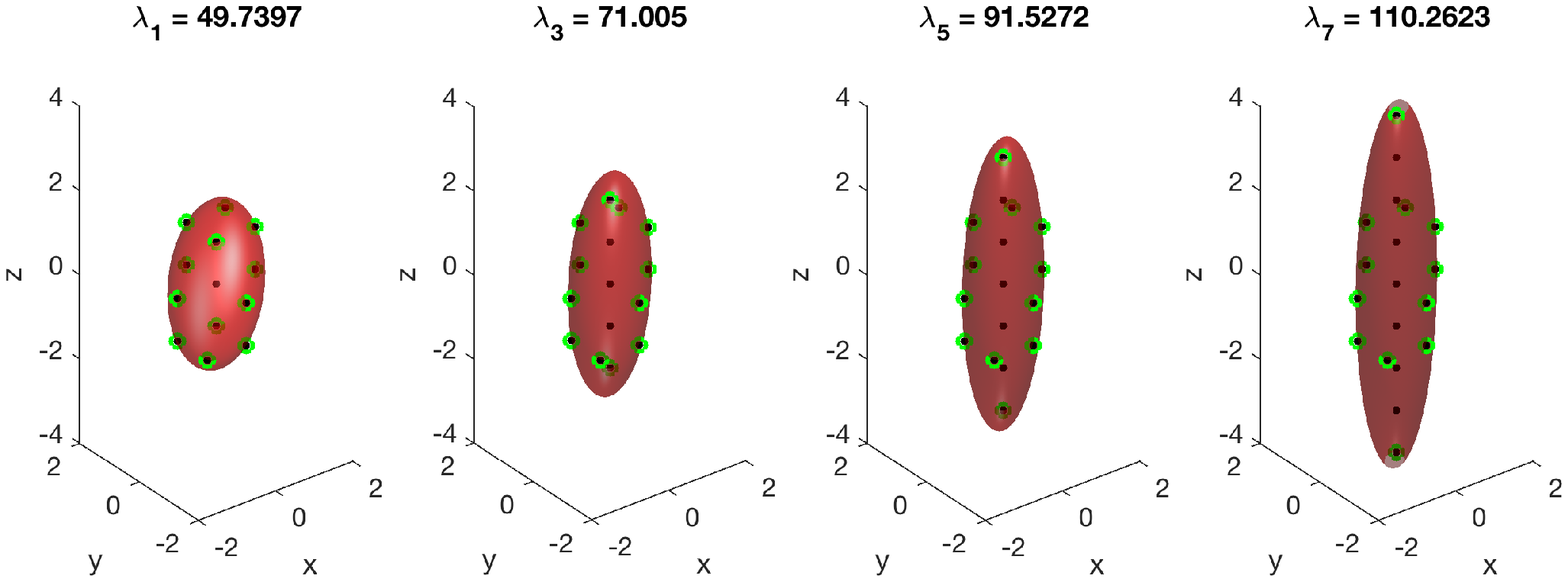}
\caption{\label{f:ellipsoid} The ellipses ($d=2$) and ellipsoids ($d=3$) corresponding to the Gram matrix, $G_{k,d}^\circ$  for $k=1,3,5,7$. The intersecting lattice points are indicated.}
\end{figure}

Finally, we conclude with a discussion of the successive minima. Recall that, for $1 \leq i \leq d$, the \emph{$i$-th successive minimum} of a lattice with basis $B$ is defined by 
$$
\gamma_i(B) = \min \{ \| v_i \| \colon \exists \textrm{ lin. ind. }  v_1\ldots, v_i \in B \mathbb Z^d  \textrm{ with } \| v_1\| \leq \ldots  \leq \|v_i\| \}. 
$$
That is, $\gamma_i$ is the smallest number $\gamma$, such that the ellipsoid $\{ \| Bx\| \leq \gamma \}$  contains $i$ linearly independent vectors (see, \eg, \cite{schurmann2009computational}). 
For each $k\geq 1$, $d\leq 8$, we have $\gamma_1( (B^\circ_{k,d})^{-t}) = 2 \sqrt{2}$ is attained by the vector $(0,\cdots,1)$ and 
$\gamma_2(  (B^\circ_{k,d})^{-t}) = \cdots = \gamma_d( (B^\circ_{k,d})^{-t}) = \sqrt{2} \kappa$. 
In particular, since  the injectivity radius of a flat torus, $T_B$, satisfies  $\textrm{inj} (T_B) = \gamma_1(B) \asymp \gamma_d(B^{-t})^{-1}$, we have that   $
\textrm{inj} (T_{k,d}^\circ)  \asymp k^{-1}$. 
If we scale $T_{k,d}^\circ$ by $\alpha = \textrm{vol}(T_{k,d}^\circ)^{- \frac{1}{d}} = | \det G_{k,d}^\circ |^{ \frac{1}{2d}} \asymp k^{ \frac{d-1}{d}}$ (see Table~\ref{t:EigTable}), 
we obtain that
$\textrm{vol}( \alpha T_{k,d}^\circ) = 1$.
We then compute 
$\textrm{inj} ( \alpha T_{k,d}^\circ)  
= \gamma_1( \alpha B) 
= \alpha \gamma_1(  B) 
\asymp  \alpha \gamma_d( B^{-t})^{-1} 
= k^{-\frac{1}{d}}$, 
which is consistent with \cite[Thm. 1.2]{Lagac__2019}.

\subsection*{Acknowledgments} We would like to thank Jean Lagac\'e and Lenny Fukshansky for useful conversations.

%\clearpage
\printbibliography

\clearpage
\appendix
\section{Numerical methods} 
\label{s:NumMeth}
In this appendix, we describe a numerical method for  approximating solutions to the optimization problem in \eqref{e:OptProb} and for generating  a vector $c  \in \mathbb R^{\frac m 2}$ that satisfies the stationarity condition given in  Theorem~\ref{t:NeccCond}.

\subsection{Optimization method.}
In  Section~\ref{s:proof:OptLattice},
we explained how we can compute Laplacian eigenvalues of given tori. Here we describe an optimization method for generating those lattices. 
The optimization problem in \eqref{e:OptProb} can be trivially rewritten as 
\begin{subequations} \label{e:Opt2}
\begin{align}
\Lambda_{k,d}^\star = \max_{\alpha,B} & \ \alpha \\
\textrm{s.t.} & \ \Lambda_j (T_B) \geq \alpha, \qquad j \geq k.
\end{align}
\end{subequations}

Our strategy for solving \eqref{e:Opt2} is to successively solve its linearization. Writing 
$$
B^{-t} = (I + \varepsilon) B_0^{-t}
$$
for some fixed $B_0 \in GL(d,\mathbb R)$ and a  matrix $\varepsilon \in GL(d,\mathbb R)$ with small norm, we compute the first-order approximations   
\begin{align*}
\det B^{-t} &= \det B_0^{-t} \det( I + \varepsilon) \\
& = \det B_0^{-t} \left(1 + \langle I, \varepsilon\rangle_F + o(\| \varepsilon \|)  \right) 
\end{align*}
and 
\begin{align*}
\lambda_k (T_B)  &= 4 \pi^2 \| B^{-t} v_k \|^2 \\
& = 4 \pi^2 \left(  v_k^t B_0^{-1} (I + \varepsilon^t) (I + \varepsilon)  B_0^{-t} v_k \right)  \\
& =  4 \pi^2 \| B_0^{-t} v_k \|^2   + 8 \pi^2 \langle  (B_0^{-t} v_k)  (B_0^{-t}v_k)^t  , \varepsilon \rangle_F + o(\| \varepsilon\|). 
\end{align*}
Combining these, and assuming $\det B_0^{-t} > 0$, we obtain 
{\small
\begin{align*}
\Lambda_{k,d} (B) & = \lambda_k (T_B) |\det B^{-t} |^{ - \frac{2}{d}} \\
& = \left( 4 \pi^2 \| B_0^{-t} v_k \|^2   + 8 \pi^2 \langle  (B_0^{-t} v_k)  (B_0^{-t}v_k)^t  , \varepsilon \rangle_F + o(\| \varepsilon\|) \right) \det(B_0^{-t})^{- \frac{2}{d}} \left(1 + \langle I , \varepsilon \rangle_F + o(\| \varepsilon \|)  \right)^{- \frac{2}{d}} \\
& = \Lambda_k(B_0) + \langle  \Sigma_k , \varepsilon \rangle_F + o(\| \varepsilon \|), 
\end{align*} }
where
$$
\Sigma_k := 8 \pi^2 \det(B_0^{-t})^{- \frac{2}{d}}  (B_0^{-t} v_k)  (B_0^{-t}v_k)^t - \frac{2}{d} \Lambda_{k,d} (B_0) I.
$$
A linearization of the optimization problem in \eqref{e:Opt2}  is then 
\begin{subequations} 
\label{e:LinOpt}
\begin{align}
\max_{\alpha,\varepsilon} & \ \alpha \\
\label{e:LinOptb}
\textrm{s.t.} & \  \Lambda_{j,d}(B_0) + \langle \Sigma_j, \varepsilon \rangle_F \geq \alpha , \qquad j \geq k. 
\end{align}
Additionally, for $\beta>0$, we add the diagonally dominant constraints 
\begin{align}
- \beta &\leq \varepsilon_{i,i} \leq \beta \qquad i \in [d] \\
- \frac{\beta}{d-1} &\leq \varepsilon_{i,j} \leq \frac{\beta}{d-1}  \qquad i \neq j
\end{align}
\end{subequations}
which ensure $\| \varepsilon \| < 2 \beta$. 
We retain only a finite number of constraints in \eqref{e:LinOptb} by considering only the $j = k, \ldots, k+ K + 1$ for some integer $K>1$ shortest lattice vectors. 
This linearization procedure is similar to that appearing in \cite{Marcotte_2013} for the closest packing problem. 

The linear optimization problem \eqref{e:LinOpt}, which depends on the parameter $\beta$, is then solved using the Gurobi linear programming library \cite{Gurobi} repeatedly until $\| B - B_0\|$ falls below a specified tolerance.  
The parameter $\beta$ is treated as a trust-region parameter and adaptively set at each iteration to ensure that the linearization of $\Lambda_{k,d}\left( B \right)$ is faithful. 

In these numerical computations, floating point arithmetic was performed to find the maximal lattice $B_{k,d}$. We then formed the Gram matrix for the dual lattice, $B^{-1}_{k,d} B^{-t}_{k,d} $ and observed  numerically that all of the elements are multiples of the smallest nonzero element of the Gram matrix, suggesting that the Gram matrix can be rescaled as an integer matrix.  We then used the 
Lenstra-Lenstra-Lov\'asz (LLL) lattice basis reduction algorithm \cite{Lenstra_1982}
to simplify the matrix and row/column permutations to obtain the laminated structure of $\mathcal{G}_k$.

\subsection{Numerical method for the stationarity condition.} \label{s:c}
Here, we explain how, in Section~\ref{s:LocOpt}, we computed a vector $c^\circ = c^\circ(k,d)  \in \mathbb R^{\frac m 2}$, that satisfies the stationarity condition given in  Theorem~\ref{t:NeccCond}. As explained in  Section~\ref{s:LocOpt}, the vector is not unique, so it is challenging to derive a general formula for $c^\circ$ from an (arbitrarily computed)  solution for various $k,d$. To overcome this obstacle, we specify an addition condition that gives uniqueness. 
For fixed $k\geq 1$, $2\leq d \leq 8$, $m$ the multiplicity of the eigenvalue, and $M_j\in \mathbb R^{d\times d}$, $j = 1,\ldots, \frac{m}{2}$ as defined in \eqref{e:M}, we consider the quadratic optimization problem 
\begin{subequations}
\begin{align}
\min_{c \in \mathbb R^{\frac m 2}}  \ & \|c\|_2^2 \\
\textrm{such that} \ &  c \geq 0 \\ 
& \sum_{j=1}^{m/2} c_j M_j = 0 \\ 
& c_1 = 1. 
\end{align}
\end{subequations}
which asks for the shortest vector $c$ (in the $\ell^2$ sense) that satisfies the desired properties. For each dimension $d$, we solved this problem for small values of $k$, and were able to deduce the general formula, yielding $c^\circ  \in \mathbb R^{\frac m 2}$ as given in Section~\ref{s:LocOpt}. 

\end{document}